\documentclass[11pt,a4paper]{amsart}
\usepackage{amsmath, amssymb, amsfonts, amsthm, float, stmaryrd, epsfig}
\usepackage[abbrev,alphabetic]{amsrefs}
\usepackage[pdfborder={0 0 0 [0 0 ]},bookmarksdepth=3, bookmarksopen=true, unicode]{hyperref}
\usepackage[latin1]{inputenc}
\usepackage[capitalize]{cleveref}
\usepackage{color}
\usepackage[ps,all,arc,rotate]{xy}
\usepackage{bbm} 



\usepackage{stackengine,scalerel}
\stackMath


\hypersetup{
    colorlinks=true,
    linkcolor=black,
    citecolor=black,
    filecolor=black,
    urlcolor=black,
}




\numberwithin{figure}{section}
\numberwithin{table}{section}

\theoremstyle{plain}
\newtheorem{thm}{Theorem}[section]
\crefname{thm}{Theorem}{Theorems}
\newtheorem*{prop*}{Proposition}
\newtheorem*{thm*}{Theorem}
\newtheorem{prop}[thm]{Proposition}
\crefname{prop}{Proposition}{Propositions}
\newtheorem{lem}[thm]{Lemma}
\crefname{lem}{Lemma}{Lemmata}
\newtheorem{cor}[thm]{Corollary}
\crefname{cor}{Corollary}{Corollaries}

\newtheorem{conj}[thm]{Conjecture}
\crefname{conj}{Conjecture}{Conjectures}
\crefname{equation}{Equation}{Equations}

\newtheorem*{BaersConj}{Baer's Noetherian Conjecture}

\theoremstyle{definition}

\newtheorem{dfn}[thm]{Definition}
\newtheorem*{dfn*}{Definition}

\theoremstyle{remark}

\newtheoremstyle{maintheorem}{}{}{\itshape}{}{\bfseries}{}{.5em}{#1 \!\thmnote{\ #3}.}
\theoremstyle{maintheorem}

    \newtheoremstyle{TheoremNum}
        {\topsep}{\topsep} 
        {\itshape} 
        {-0.25cm} 
        {\bfseries} 
        {.} 
        { }  
        {\thmname{#1}\thmnote{ \bfseries #3}}
    \theoremstyle{TheoremNum}
    \newtheorem{duplicate}{}

\makeatletter
\let\c@figure\c@thm
\let\c@table\c@thm
\makeatother
\crefname{figure}{Figure}{Figures}
\crefname{table}{Table}{Tables}



\newcommand{\typeF}[1]{\mathtt{F}_{#1}}
\newcommand{\typeFP}[1]{\mathtt{FP}_{#1}}
\newcommand{\typeVF}{\mathtt{VF}}
\newcommand{\typeVFP}{\mathtt{VFP}}

\def\R{\mathbb{R}}
\def\N{\mathbb{N}}
\def\Z{\mathbb{Z}}

\def\1{\mathbbm{1}}

\def\s-{\smallsetminus}

\def\iff{if and only if }

\binoppenalty=\maxdimen
\relpenalty=\maxdimen


\usepackage[foot]{amsaddr}
\author{Sam Hughes}
\email{sam.hughes@maths.ox.ac.uk}
\author{Dawid Kielak}
\email{kielak@maths.ox.ac.uk}
\address[S.~Hughes and D.~Kielak]{Mathematical Institute,
	University of Oxford,
	Andrew Wiles Building,
	Radcliffe Observatory Quarter,
	Woodstock Road,
	Oxford
	OX2 6GG,
	United Kingdom}
\author{Peter H. Kropholler}
\email{p.h.kropholler@soton.ac.uk}
\author{Ian J. Leary}
\email{i.j.leary@soton.ac.uk}
\address[P.H.~Kropholler and I.J.~Leary]{School of Mathematical Sciences,
	University of Southampton,
	Southampton
	SO17 1BJ
	United Kingdom}

\title{Coherence for elementary amenable groups}

\begin{document}

\begin{abstract}
  We prove that for an elementary amenable group, coherence of the
  group, homological coherence of the group, and coherence of the
  integral group ring are all equivalent.  This generalises a result
  of Bieri and Strebel for finitely generated soluble groups.
\end{abstract}

\maketitle

\section{Introduction}
A group $G$ is \emph{type $\typeF{n}$} if $G$ admits a model for a $\mathsf{K}(G,1)$ with finite $n$-skeleton, \emph{type $\typeF{\infty}$} if $G$ is type $\typeF{n}$ for all $n$, and type $\typeF{}$ if $G$ admits a finite model for a $\mathsf{K}(G,1)$.  If $G$ has a finite index subgroup of type $\typeF{}$, then we say that $G$ is \emph{type $\typeVF$}.  Replacing `a model for a $\mathsf{K}(G,1)$' in the previous definitions with `a projective resolution of $\Z$ over $\Z G$', one obtains the properties $\typeFP{n}$, $\typeFP{\infty}$, $\typeFP{}$, and $\typeVFP$, respectively.

It is known from the work of Bestvina and Brady \cite{BestvinaBrady1997} that the properties of a group being finitely presentable and being of type $\typeFP{2}$ are not equivalent.  In this note we investigate the equivalence of local versions of these properties.

A group $G$ is \emph{coherent} if every finitely generated subgroup is finitely presented and \emph{homologically coherent} if every finitely generated subgroup is of type $\typeFP{2}$.  A ring $R$ is \emph{coherent} if every finitely generated left ideal is finitely presented. These various notions of coherence are being very actively investigated -- let us mention here the remarkable recent article of Jaikin-Zapirain--Linton \cite{JaikinZapirain2023} that establishes coherence of one-relator groups via the homological variants.

If  $\Z G$ is coherent or $G$ is coherent, then $G$ is homologically coherent. Neither converse has been established and in particular there is no known implication between the properties of $\Z G$ being coherent and $G$ being coherent.

In \cite{BieriStrebel1979}, Bieri and Strebel prove that the properties of coherence of the group, homological coherence, and coherence of the group ring are equivalent for a finitely generated soluble group. In doing so they relate the properties to ascending HNN extensions, these being groups which are generated by a \emph{vertex group} $B$ and \emph{stable letter} $t$ such that $t^{-1}Bt\supseteq B$, corresponding to actions on trees with one orbit of edges and a marked end. The HNN extension is \emph{properly ascending} if $t^{-1}Bt\supset B$.

Note that Groves \cite{Groves1978} independently established the equivalence of (2) and (3) below.

\begin{thm}[Bieri--Strebel]\label{Bieri Strebel soluble coherence}
For a finitely generated soluble group $G$ the following are equivalent:
\begin{enumerate}
	\item $G$ is coherent;
	\item $G$ is homologically coherent;
	\item $\Z G$ is coherent;
	\item $G$ is polycyclic or $G$ is a properly ascending HNN extension with polycyclic vertex group.
\end{enumerate}
\end{thm}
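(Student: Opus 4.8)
The plan is to close the cycle around the two implications already recorded before the statement, namely $(1)\Rightarrow(2)$ and $(3)\Rightarrow(2)$ (if $G$ or $\Z G$ is coherent then $G$ is homologically coherent). It therefore suffices to establish $(4)\Rightarrow(1)$, $(4)\Rightarrow(3)$ and $(2)\Rightarrow(4)$, after which $(4)\Rightarrow(1)\Rightarrow(2)\Rightarrow(4)$ and $(4)\Rightarrow(3)\Rightarrow(2)\Rightarrow(4)$ render all four statements equivalent. For the ascending HNN extension of item $(4)$ I write $G=N\rtimes\langle t\rangle$, where $N=\bigcup_{n\ge 0}t^{-n}Bt^n$ is the increasing union of the polycyclic groups $t^{-n}Bt^n$ and is hence locally polycyclic, and where $\phi\colon B\to B$, $b\mapsto tbt^{-1}$, is the defining injective endomorphism.

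For $(4)\Rightarrow(1)$ the polycyclic case is immediate, since every subgroup of a polycyclic group is again polycyclic and so finitely presented. For the ascending case I would let $G$ act on the Bass--Serre tree $T$ of the HNN decomposition; because the extension is ascending, $G$ fixes an end $\omega$ of $T$, and the quotient map $G\to\Z$ with kernel $N$ records translation towards $\omega$. Let $H\le G$ be finitely generated and restrict this action. If $H$ fixes a vertex then $H$ lies in a conjugate of $B$ and is polycyclic, hence finitely presented. Otherwise the restriction $H\to\Z$ is non-zero; choosing $s\in H$ generating its image and adjusting a finite generating set of $H$ by powers of $s$, the remaining generators lie in $H\cap N$, are elliptic, and fix $\omega$, so they fix a common vertex $v$. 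Bass--Serre theory then exhibits $H$ as an ascending HNN extension with stable letter $s$ and vertex group a subgroup of $\mathrm{Stab}(v)$, a conjugate of some $t^{-n}Bt^n$, which is therefore polycyclic. Since an ascending HNN extension of a finitely presented group is finitely presented, $H$ is finitely presented in both cases.

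For $(4)\Rightarrow(3)$ the polycyclic case follows from P.~Hall's theorem that $\Z G$ is then Noetherian, together with the fact that Noetherian rings are coherent. In the ascending case $\Z B$ is Noetherian by Hall, and the identity $t\,b=\phi(b)\,t$ realises $\Z G$ as the Ore localisation, obtained by inverting $t$, of the skew polynomial ring $\Z B[t;\phi]$ associated with the injective endomorphism $\phi$ of $\Z B$. I would conclude using the ring-theoretic facts that a skew polynomial ring over a right Noetherian ring along an injective endomorphism is right coherent, and that coherence is preserved under Ore localisation. When the extension is properly ascending $\Z G$ is not Noetherian, so it is coherence, and not the Noetherian property, that survives.

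The implication $(2)\Rightarrow(4)$ is the heart of the argument and the step I expect to be hardest. The full force of homological coherence is needed here, not merely that $G$ is $\typeFP{2}$: a finitely presented soluble group may contain a copy of $\Z\wr\Z$, which is not $\typeFP{2}$, and such a group satisfies none of $(1)$--$(4)$. I would proceed in two stages. First, homological coherence rules out this infinite-rank behaviour --- an infinite Hirsch length would produce a finitely generated subgroup that is not $\typeFP{2}$ --- so that $G$ has finite Hirsch length. Secondly, taking $H=G$ shows $G$ is $\typeFP{2}$, and I feed finite Hirsch length together with the $\typeFP{2}$ property into Bieri and Strebel's geometric invariant: if $G$ is not polycyclic one produces a discrete character $\chi\colon G\twoheadrightarrow\Z$ lying in $\Sigma^1(G)$, which exhibits $G$ as an ascending HNN extension with finitely generated vertex group contained in $\ker\chi$. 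The genuinely delicate point, and the main obstacle, is to choose $\chi$ so that $\ker\chi$ is \emph{locally polycyclic}, equivalently so that the vertex group is polycyclic and not merely finitely generated of finite Hirsch length --- the group $\mathrm{BS}(1,2)$ shows that these are different requirements. Combining finite Hirsch length with the fine structure of $\Sigma^1(G)$ for soluble groups to make this selection of $\chi$ is the technical core supplied by Bieri and Strebel's analysis.
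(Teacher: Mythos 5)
First, a framing point: the paper offers no proof of this statement at all --- it is quoted as Bieri--Strebel's theorem, with \cite{BieriStrebel1979} (and \cite{Groves1978} for the equivalence of (2) and (3)) as the source --- so your proposal must be judged on its own merits rather than against an argument in the text. Your reduction to $(4)\Rightarrow(1)$, $(4)\Rightarrow(3)$ and $(2)\Rightarrow(4)$ is sound, and your Bass--Serre argument for $(4)\Rightarrow(1)$ is essentially correct: a finitely generated $H$ is either elliptic, hence contained in some $t^{-n}Bt^{n}$ and polycyclic, or contains a hyperbolic element $s$, and (orienting $s$ away from the fixed end) the subgroup generated by the conjugates $s^{k}Es^{-k}$, $k\geqslant 0$, of the elliptic part $E$ fixes a common vertex, giving a polycyclic vertex group for an ascending HNN structure on $H$. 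In $(4)\Rightarrow(3)$, however, the ``ring-theoretic fact'' you invoke is overstated: coherence of $R[t;\phi]$ for an \emph{arbitrary} injective endomorphism $\phi$ of a right Noetherian ring $R$ is not a standard theorem, and the arguments that exist require $R$ to be flat (in fact free) as a module over $\phi(R)$. This hypothesis does hold in your situation, since $\Z B$ is a free module over $\Z\phi(B)$ for the subgroup $\phi(B)\leqslant B$, so the step is repairable, but as written it rests on an unjustified appeal; also note that $\{t^{n}\}$ is only a one-sided Ore set in $\Z B[t;\phi]$.

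The genuine gap is $(2)\Rightarrow(4)$, which you yourself call the heart of the argument and then do not prove: your sketch ends by appealing to ``the fine structure of $\Sigma^1(G)$ for soluble groups \dots\ supplied by Bieri and Strebel's analysis,'' i.e.\ to the very result under proof, which is circular. Neither of the two assertions your sketch rests on is established. The first --- that homological coherence forces finite Hirsch length --- is unproven, and is moreover the wrong intermediate target: Hirsch length is blind to torsion (the lamplighter group $\bigl(\bigoplus_{i\in\N} C_2\bigr)\rtimes\Z$ is finitely generated, soluble, of Hirsch length $1$, yet has infinite Pr\"ufer rank), and what Bieri--Strebel actually prove, as the paper's introduction records, is finite \emph{Pr\"ufer} rank, i.e.\ a uniform bound on generators of all subgroups. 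The second assertion --- that a non-polycyclic soluble group of type $\typeFP{2}$ and finite rank admits a character whose HNN splitting has \emph{polycyclic} (not merely finitely generated) vertex group --- is exactly the Mal'cev/nilpotent-theoretic core of their proof, and there is no cheap substitute: splitting off an ascending HNN extension with finitely generated vertex group and trying to repeat gives no well-founded induction, which is precisely the difficulty the present paper's ``cascading'' machinery is built to overcome in the elementary amenable setting. As it stands, your proposal proves $(4)\Rightarrow(1)$, proves $(4)\Rightarrow(3)$ modulo the flatness point, and cites rather than proves $(2)\Rightarrow(4)$, so the cycle of implications does not close.
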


Our main theorem (\cref{thm main} below) concerns elementary amenable groups.
Recall that the class of \emph{elementary amenable groups} is the smallest class of groups containing all finite and abelian groups that is closed under extensions and direct limits (and isomorphisms). In this paper, classes of groups are implicitly assumed to be closed under isomorphism. The class of elementary amenable groups is embryonically present in von Neumann's seminal work \cite{Neumann1929}, then reintroduced by the short name of \emph{elementary groups} in \cite{Day1957} and studied further in \cite{Chou1980}. It can readily be shown that the class is subgroup closed and quotient closed, and that it contains all soluble groups.
Thus the following generalises Bieri and Strebel's result.

\begin{thm}\label{thm main}
For a finitely generated elementary amenable group $G$ the following are equivalent:
\begin{enumerate}
	\item $G$ is coherent;
	\item $G$ is homologically coherent;
	\item $\Z G$ is coherent;
	\item either $G$ is virtually polycyclic or $G$ is a properly ascending HNN extension with virtually polycyclic vertex group.
\end{enumerate}
\end{thm}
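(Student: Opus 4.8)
The plan is to close the chain of implications by using the already-noted facts that (1) $\Rightarrow$ (2) and (3) $\Rightarrow$ (2), and by proving the three implications (4) $\Rightarrow$ (1), (4) $\Rightarrow$ (3), and (2) $\Rightarrow$ (4). The last of these is the heart of the matter and the point at which elementary amenability (beyond mere solubility) must be exploited; the first two are a matter of assembling known structural results and adapting the Bieri--Strebel machinery to allow a finite extension of the vertex group.

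For (4) $\Rightarrow$ (1) and (4) $\Rightarrow$ (3) I would argue as follows. If $G$ is virtually polycyclic then $G$ is coherent (indeed Noetherian as a group) and, by Hall's theorem, $\Z G$ is left and right Noetherian, hence coherent. If instead $G = \langle B, t \mid t^{-1}Bt \supseteq B\rangle$ is a properly ascending HNN extension with $B$ virtually polycyclic, then $\Z B$ is Noetherian, and I would deduce coherence of $\Z G$ by realising it as an ascending HNN extension (a skew Laurent-type construction) of the Noetherian ring $\Z B$ along the endomorphism induced by conjugation by $t$, mirroring the ring-theoretic half of \cref{Bieri Strebel soluble coherence}. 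Coherence of the group $G$ would then follow from the soluble case already handled by Bieri and Strebel, after reducing to a vertex group that is genuinely polycyclic; this is the step where one re-uses the ascending structure exactly as they do.

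The implication (2) $\Rightarrow$ (4) I would prove in three steps, exploiting throughout that $G$, being a finitely generated subgroup of itself, is itself of type $\typeFP{2}$. First, I would show that $G$ has finite Hirsch length: a finitely generated elementary amenable group of infinite Hirsch length is not of type $\typeFP{2}$ (morally because it contains a restricted wreath-product phenomenon, the prototype being $\Z \wr \Z$ or a lamplighter group), contradicting homological coherence. Second, invoking the Hillman--Linnell structure theory for elementary amenable groups of finite Hirsch length, together with the fact that homological coherence forces the locally finite radical $L$ of $G$ to be finite (an infinite $L$ would again violate $\typeFP{2}$), I would conclude that $G$ is finite-by-(virtually soluble), hence virtually soluble. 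Third, I would pass to a soluble subgroup $S$ of finite index: since every finitely generated subgroup of $S$ is a finitely generated subgroup of $G$, the group $S$ is finitely generated and homologically coherent, so \cref{Bieri Strebel soluble coherence} applies and shows that $S$ is polycyclic or a properly ascending HNN extension with polycyclic vertex group. If $S$ is polycyclic then $G$ is virtually polycyclic; otherwise I would promote the ascending HNN structure of $S$ to one on $G$.

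The main obstacle is precisely this final transfer from $S$ to $G$ in the non-polycyclic case. The first two steps are essentially bookkeeping with Hirsch length and the Hillman--Linnell theory, but showing that a properly ascending HNN structure on the finite-index subgroup $S$ descends to a properly ascending HNN structure on $G$ with \emph{virtually polycyclic} vertex group is delicate: one must arrange that the end of the associated tree fixed by $S$ is $G$-invariant and that the induced vertex group remains virtually polycyclic. I expect to control this through the behaviour of the Bieri--Strebel(--Neumann--Renz) invariant under passage to finite-index overgroups, after first replacing $S$ by a normal soluble subgroup of finite index so that the canonical action is compatible with the full group.
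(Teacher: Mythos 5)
Your reduction to the soluble case (implication (2) $\Rightarrow$ (4)) rests on two lemmas that are false as stated, and the falsity is fatal because you explicitly propose to exploit only that $G$ itself is of type $\typeFP{2}$. First, it is not true that a finitely generated elementary amenable group of infinite Hirsch length fails to be $\typeFP{2}$: the Baumslag--Remeslennikov group is a finitely presented (hence $\typeFP{2}$) metabelian group containing $\Z \wr \Z$, so it is elementary amenable of infinite Hirsch length. Second, it is not true that an infinite locally finite radical violates $\typeFP{2}$: Houghton's group $H_3$ is finitely presented and elementary amenable, and its normal subgroup of finitary permutations is infinite locally finite. Both examples do fail \emph{homological coherence} (each contains a finitely generated subgroup such as $\Z\wr\Z$ or a lamplighter-by-$\Z$ group that is not $\typeFP{2}$), but that is precisely the point: to run your argument you would have to exhibit a non-$\typeFP{2}$ finitely generated \emph{subgroup} of an arbitrary finitely generated elementary amenable group of infinite Hirsch length (or with infinite locally finite radical), and producing such a subgroup is essentially the entire difficulty of the theorem, not a preliminary bookkeeping step. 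The paper avoids Hirsch length and the Hillman--Linnell structure theory altogether: it converts homological coherence into \emph{indicable coherence} via Bieri--Strebel's Theorem A, proves that indicable coherence passes to quotients of groups with no non-abelian free subgroups (\cref{HNN lemma}), and then runs a transfinite induction on the elementary amenable hierarchy (the ``cascading'' formalism) to conclude virtual solubility in \cref{EA indcoh vSol}.

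The second gap is the step you yourself flag as the main obstacle: promoting the properly ascending HNN structure on the finite-index soluble subgroup $S$ to one on $G$ with virtually polycyclic vertex group. Gesturing at the behaviour of the BNS invariant under finite-index passage is the right instinct but is not a proof; the paper's \cref{peters adjustment} carries this out concretely, by passing to the largest normal locally \{virtually polycyclic\} subgroup $L$ (a Hirsch--Plotkin-type radical constructed in \cref{General Hirsch Plotkin method}), observing that $G/L$ is trivial, infinite cyclic, or infinite dihedral, killing the dihedral case with a BNS-invariant argument (both epimorphisms $H \to \Z$ would lie in $\Sigma^1(H)$, forcing $H$ virtually polycyclic), and splitting the infinite cyclic case with Bieri--Strebel's Theorem A. Your treatment of (4) $\Rightarrow$ (1) and (4) $\Rightarrow$ (3) is workable in outline, though the paper takes a lighter route -- it proves only (4) $\Rightarrow$ (2) directly (via virtual solubility and \cref{Bieri Strebel soluble coherence}, plus the fact that ring coherence passes to finite-index overgroups) and recovers (1) and (3) from the already-established implication (2) $\Rightarrow$ (1),(3); either way one must do the finite-index reduction from virtually polycyclic vertex groups to the genuinely soluble setting of \cref{Bieri Strebel soluble coherence}, which your sketch leaves implicit.
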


We combine the above with the following three observations:  firstly, that ascending HNN extensions of soluble groups are soluble (see \cref{union of sols});  secondly, that an ascending HNN extension of a group of type $\typeF{\infty}$ is also of type $\typeF{\infty}$; and thirdly, that by \cite{KrophollerMartinezPerezNucinkis2009}*{Theorem~1.1} an elementary amenable group of type $\typeF{\infty}$ is type $\typeVF$. In this way we obtain the following.

\begin{cor}
	Every (homologically) coherent elementary amenable group is virtually soluble and of type $\typeVF$.
\end{cor}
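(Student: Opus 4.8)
The plan is to feed the dichotomy of \cref{thm main} into the three observations recorded above. So let $G$ be our finitely generated (homologically) coherent elementary amenable group---finite generation being forced if we are to invoke \cref{thm main}, and being in any case necessary for $G$ to be of type $\typeVF$. By the implication $(1)\Rightarrow(4)$ (equivalently $(2)\Rightarrow(4)$) of \cref{thm main}, either $G$ is virtually polycyclic, or $G$ is a properly ascending HNN extension with virtually polycyclic vertex group $B$. In the first case there is nothing to do: a virtually polycyclic group is visibly virtually soluble, and it is virtually poly-$\Z$---hence of type $\typeF{}$ up to finite index---so it is of type $\typeVF$. It therefore remains to treat the ascending HNN extension $G$, for which I would establish the finiteness property and virtual solubility separately.

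For the finiteness property I would argue along the chain of the three observations. The vertex group $B$ is virtually polycyclic, hence virtually poly-$\Z$, hence of type $\typeF{\infty}$ (a poly-$\Z$ group has a finite classifying space, and type $\typeF{\infty}$ is inherited by and from finite-index subgroups). By the second observation the ascending HNN extension $G$ is then itself of type $\typeF{\infty}$. Finally $G$ is elementary amenable, so by the third observation, namely \cite{KrophollerMartinezPerezNucinkis2009}*{Theorem~1.1}, it is of type $\typeVF$.

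For virtual solubility the starting point is \cref{union of sols}, which gives solubility when the vertex group is soluble; the main obstacle is upgrading this to a merely \emph{virtually} polycyclic vertex group, since an injective endomorphism of $B$ need not preserve any finite-index soluble subgroup. I would bypass this by analysing the kernel $N$ of the projection $G\to\Z$ directly. Writing $B_n=t^{-n}Bt^{n}$, the group $N$ is the ascending union $\bigcup_n B_n$ of isomorphic copies of $B$. The derived series commutes with such directed unions, so the soluble radicals contribute a quotient of bounded derived length, while the perfect cores $B_n^{(\infty)}$ form an ascending chain of finite subgroups of bounded order (each perfect subgroup of $B_{n+1}$ lies in $B_{n+1}^{(\infty)}$) and therefore stabilise. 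Hence $N$ is finite-by-soluble, and a standard centraliser argument---the centraliser $C$ of the finite characteristic subgroup $N^{(\infty)}$ has finite index in $N$ and is abelian-by-soluble, hence soluble---shows that $N$ has a soluble characteristic subgroup $C$ of finite index. As $G/N\cong\Z$ and $C$ is characteristic in $N\normal G$, the quotient $G/C$ is finite-by-cyclic; passing to the preimage of a finite-index $\Z$ in $G/C$ exhibits a finite-index soluble-by-$\Z$, hence soluble, subgroup of $G$. Thus $G$ is virtually soluble, completing the argument.
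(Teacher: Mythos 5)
Your reduction via \cref{thm main}, your handling of the virtually polycyclic case, and your proof of the $\typeVF$ part (virtually polycyclic $\Rightarrow$ type $\typeF{\infty}$, ascending HNN extensions preserve type $\typeF{\infty}$, and elementary amenable of type $\typeF{\infty}$ $\Rightarrow$ type $\typeVF$ by \cite{KrophollerMartinezPerezNucinkis2009}*{Theorem~1.1}) are correct and coincide with the argument the paper intends. The gap is in your argument for virtual solubility in the HNN case.

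There you assert that the perfect cores $B_n^{(\infty)}$ are finite of bounded order --- in effect, that a virtually polycyclic group is finite-by-soluble. But virtually polycyclic groups are only soluble-\emph{by}-finite, and the two classes genuinely differ: there exist infinite \emph{perfect} virtually polycyclic groups. Take $L = \{x \in \Z^5 : \sum_i x_i = 0\}$ with $A_5$ permuting coordinates, and set $B = L \rtimes A_5$. For distinct $i,j$, choosing $k,l \notin \{i,j\}$ and the $3$-cycle $\sigma = (i\,j\,k)$ gives $(\sigma - 1)(e_i - e_l) = e_j - e_i$, so $[L, A_5] = L$; since $A_5$ is perfect, $B$ is perfect, and $B^{(\infty)} = B$ is infinite. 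Moreover $\iota(v,\sigma) = (2v,\sigma)$ is an injective, non-surjective endomorphism of $B$, so the HNN extension $G$ of $B$ along $\iota$ is a properly ascending HNN extension with virtually polycyclic vertex group; it is elementary amenable, and coherent by \cref{thm main}, so your argument must apply to it. Here $N = \bigcup_n B_n \cong \Z[1/2]^4 \rtimes A_5$ is an ascending union of perfect groups, hence perfect and infinite; in particular it is \emph{not} finite-by-soluble (a perfect finite-by-soluble group is finite), your chain of perfect cores is the non-stabilising chain $B_0 \subset B_1 \subset \cdots$ itself, and the centraliser step has no finite characteristic subgroup to work with. The conclusion does survive --- this particular $N$ is abelian-by-finite --- but not by your route. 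The paper gets around exactly the obstacle you flagged ($\iota$ need not preserve any finite-index soluble subgroup) in the proof of \cref{EA indcoh vSol}: choose a finite group $Q$ so that the intersection $C$ of the kernels of \emph{all} homomorphisms $B \to Q$ is soluble; then $C$ has finite index (as $B$ is finitely generated there are only finitely many such homomorphisms) and $\iota(C) \leqslant C$ holds automatically, since precomposing any homomorphism $B \to Q$ with $\iota$ is again one. This yields a finite-index ascending HNN sub-extension with soluble vertex group, which is soluble by \cref{union of sols}. Alternatively, the virtually soluble half of the corollary can simply be quoted from \cref{EA coherent then vSol}.
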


Although the conclusions of \cref{Bieri Strebel soluble coherence,thm main} are very similar, the methods of proof are quite different.

Bieri--Strebel prove that any homologically coherent soluble group $G$ has finite Pr\"ufer rank --- namely, there exists an $r$ such that every subgroup can be generated by $r$ elements.  From here they apply results of Mal'cev and the theory of nilpotent groups to deduce that $G$ is polycyclic or that $G$ is a properly ascending HNN extension with polycyclic vertex group.

In comparison, we introduce the notion of $G$ being \emph{indicably coherent} (\Cref{defn indicable}) and prove that this property is inherited by quotients of groups with no non-abelian free subgroups, and so in particular by elementary amenable groups.  We prove a general virtual splitting principle for elementary amenable groups as ascending HNN extensions of lower complexity groups.  Using these two tools we show that every finitely generated elementary amenable indicably coherent group $G$ is virtually soluble.

Since the technique we use is very much based on understanding the behaviour of epimorphisms to $\Z$ and their kernels, and since we utilise it explicitly in the proof of \cref{peters adjustment}, let us briefly discuss the Bieri--Neumann--Strebel invariant, introduced in \cite{Bierietal1987}; note that we will discuss a slightly restricted version of the invariant, over $\Z$ rather than $\R$. For a group $G$, we will identify the first cohomology group $\operatorname{H}^1(G;\Z)$ with the set of homomorphisms $G \to \Z$. Under this isomorphism, primitive elements of $\operatorname{H}^1(G;\Z)$ correspond to epimorphisms $G \to \Z$. The \emph{BNS invariant} $\Sigma^1(G)$ is the subset of the set of primitive elements of $\operatorname{H}^1(G;\Z)$ consisting of precisely those epimorphisms that are induced by ascending HNN extensions with finitely generated vertex groups \cite[Proposition 3.1 and Theorem 5.2]{Brown1987}.
The BNS invariant satisfies the following property: given an epimorphism $\phi \colon G \to \Z$, the kernel $\ker \phi$ is finitely generated \iff both $\phi$ and $-\phi$ lie in $\Sigma^1(G)$ \cite[Theorem B1]{Bierietal1987}.

\subsection*{Two conjectures}
We close the introduction with two conjectures.  First, recall two related reformulations, one of the Noetherian property and one of coherence of rings: namely a ring $R$ is \emph{Noetherian} if and only if every finitely generated left or right $R$-module is type $\typeFP{\infty}$,
and a ring $R$ is \emph{coherent} if and only if every finitely presented left or right $R$-module is of type $\typeFP{\infty}$.  Put yet another way,
Noetherian rings are the rings where the category of finitely generated (left or right) modules is abelian and coherent rings are the rings where the category of finitely presented (left or right) modules is abelian.

The first conjecture is a long-standing open problem attributed to Baer.  It is well known to hold for elementary amenable groups (we include a proof of this fact in \Cref{Baer for EA}).

\begin{BaersConj}
Let $G$ be a group.  Then $\Z G$ is Noetherian if and only if $G$ is virtually polycyclic.
\end{BaersConj}
Note that one direction of the conjecture is known: that the group ring of a virtually polycyclic group is Noetherian was established by Philip Hall in the nineteen-fifties \cite{Hall1954} and can be considered as a non-commutative version of Hilbert's Basissatz. Hall's result would surely have been an inspiration to Baer and others to begin asking if the converse is true.
As observed by Kropholler--Lorensen~\cite{KrophollerLorensen2019}, the fact that $\Z G$ is Noetherian implies that $G$ must be amenable, and hence in Baer's conjecture we can restrict our attention solely to amenable groups.

We offer the following, analogous conjecture. Let us say that the group ring $\Z G$ is \emph{properly coherent} if it is coherent but not Noetherian.

\begin{conj}
Let $G$ be a finitely generated amenable group.  Then $\Z G$ is properly coherent if and only if $G$ is a properly ascending HNN extension with virtually polycyclic vertex group.
\end{conj}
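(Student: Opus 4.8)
The plan is to treat the two implications separately, the forward direction being the genuinely difficult one. For the reverse implication, observe first that if $G$ is a properly ascending HNN extension with virtually polycyclic vertex group $B$, then $G$ is automatically a finitely generated elementary amenable group: its base is the strictly ascending union $\bigcup_{n\ge 0} t^{-n}Bt^{n}$ of virtually polycyclic groups, hence elementary amenable, and $G$ is an extension of this base by $\Z$ (compare \cref{union of sols}). Moreover this base is not finitely generated, so $G$ fails the maximal condition and is therefore not virtually polycyclic. I would then invoke \cref{thm main}, implication $(4)\Rightarrow(3)$, to conclude that $\Z G$ is coherent, and Baer's Noetherian Conjecture in the elementary amenable case (\Cref{Baer for EA}) to conclude that $\Z G$ is not Noetherian, since $G$ is not virtually polycyclic. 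Together these say that $\Z G$ is properly coherent.

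For the forward implication, suppose $G$ is finitely generated amenable with $\Z G$ properly coherent. Since $\Z G$ is coherent, $G$ is homologically coherent, as recalled in the introduction; and since $\Z G$ is not Noetherian while Hall's theorem makes group rings of virtually polycyclic groups Noetherian, $G$ is not virtually polycyclic. The key point is that these hypotheses would finish the proof as soon as one knew that $G$ is elementary amenable: by \cref{thm main}, a finitely generated homologically coherent elementary amenable group that is not virtually polycyclic is precisely a properly ascending HNN extension with virtually polycyclic vertex group, via $(2)\Leftrightarrow(4)$. Hence the entire conjecture reduces to the question: must a finitely generated, homologically coherent, amenable group be elementary amenable?

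This reduction is exactly the amenable analogue of the step by which Baer's conjecture is known for elementary amenable groups but open in general, and I expect it to be the main obstacle. The difficulty is that the machinery developed here --- indicable coherence and the virtual splitting principle --- is tailored to the elementary amenable hierarchy and its transfinite structure, whereas finitely generated amenable groups need not be elementary amenable, as Grigorchuk-type examples show. One natural line of attack is to adapt the Bieri--Strebel rank argument: show that homological coherence forces $G$ to have finite Pr\"ufer rank, or more ambitiously finite Hirsch length, and then argue that finite rank together with amenability forces elementary amenability. Alternatively, one could try to produce directly, via the Bieri--Neumann--Strebel invariant $\Sigma^1(G)$, an epimorphism $\phi\colon G\to\Z$ with $\phi\in\Sigma^1(G)$ but $-\phi\notin\Sigma^1(G)$, whose finitely generated vertex group inherits proper coherence and is, by induction on complexity, virtually polycyclic. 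Either route requires genuinely new input controlling the structure of amenable groups outside the elementary amenable class, and this is where the content of the conjecture lies.
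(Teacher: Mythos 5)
This statement is a \emph{conjecture} in the paper, not a theorem: the authors give no proof, remarking only that \cref{thm main} verifies it for elementary amenable groups (and that amenability cannot be dropped, since $\Z F_n$ is coherent). Your proposal is therefore correctly calibrated, and it establishes exactly as much as the paper does. Your reverse implication is a complete and correct argument: a properly ascending HNN extension with virtually polycyclic vertex group is automatically elementary amenable, since the base $\bigcup_{n\geqslant 0}t^{-n}Bt^{n}$ is a directed union of virtually polycyclic groups and $G$ is an extension of it by $\Z$; then \cref{thm main}, (4)$\Rightarrow$(3), gives coherence of $\Z G$, and the strictly ascending base is not finitely generated, so $G$ is not virtually polycyclic. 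The one step you should make explicit is the bridge from rings to groups when concluding that $\Z G$ is not Noetherian: \Cref{Baer for EA} concerns Noetherian \emph{groups}, so you need the standard fact that a Noetherian group ring forces the ascending chain condition on subgroups (a strictly increasing chain of subgroups yields a strictly increasing chain of left ideals via augmentation ideals) --- or, more directly, observe that the non-finitely-generated base already yields a non-finitely-generated left ideal of $\Z G$. With that supplied, the reverse direction is a theorem.

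Your forward implication is correctly reduced, not proved: you show that coherence of $\Z G$ gives homological coherence of $G$, that non-Noetherianity rules out virtual polycyclicity by Hall's theorem, and that \cref{thm main}, (2)$\Rightarrow$(4), would finish the argument \emph{if} $G$ were known to be elementary amenable. Whether a finitely generated amenable, homologically coherent group must be elementary amenable (equivalently, virtually soluble) is precisely the open content of the conjecture, and nothing in the paper closes this gap either: the indicable-coherence and cascading machinery is built on transfinite induction over the elementary amenable hierarchy and has no purchase on amenable groups outside that class. So your assessment of where the difficulty lies is accurate, and your proposal matches the paper's own position on this statement.
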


Note that assumption of amenability is necessary since $\Z F_n$, where $F_n$ denotes the free group of rank $n$, is a semifir, and hence is coherent.  Our \Cref{thm main} verifies the conjecture for elementary amenable groups.

\subsection*{Acknowledgements}
This work has received funding from the European Research Council (ERC) under the European Union's Horizon 2020 research and innovation programme (Grant agreement No. 850930).

The authors are grateful for financial support from the Clay Mathematical Institute `Hot-topics small working groups' scheme which helped facilitate this research.

\section{Radicals}
It will be convenient in the proofs that follow to make use of a variation on the Hirsch--Plotkin radical of a group. Recall that the Fitting subgroup of a group is the join of the nilpotent normal subgroups. Fitting's lemma says that if $H$ and $K$ are nilpotent normal subgroups of a group then $HK$ is also a nilpotent normal subgroup, and consequently the Fitting subgroup is the directed union of the nilpotent normal subgroups. Note that the Fitting subgroup need not necessarily be nilpotent itself, and hence it can be desirable to consider the Hirsch--Plotkin radical of a group. This is defined to be the join of the normal locally nilpotent subgroups and it is a characteristic locally nilpotent subgroup. There is a lemma that underpins the Hirsch--Plotkin radical in the same way that Fitting's lemma underpins the Fitting subgroup: this lemma says that if $H$ and $K$ are normal locally nilpotent subgroups then their join $HK$ is locally nilpotent. The Hirsch--Plotkin radical is the join of all normal locally nilpotent subgroups and the lemma ensures that it is the directed union of the normal locally nilpotent subgroups: in particular the Hirsch--Plotkin radical is the unique largest normal locally nilpotent subgroup.

For our purpose, other radicals are convenient. The following generalizes the Hirsch--Plotkin radical.  If $\mathcal P$ is a subgroup-closed  class of groups then ${\small \mathbf L}\mathcal P$ denotes the class of those groups whose finitely generated subgroups belong to $\mathcal P$.

\begin{lem}\label{General Hirsch Plotkin method}
Let $\mathcal P$ be a subgroup-closed class of finitely generated groups with the property that if $H$ and $K$ are normal $\mathcal P$-subgroups of a group then $HK$ is also a $\mathcal P$-subgroup. Then every group contains a unique largest normal ${\small \mathbf L}\mathcal P$-subgroup.
\end{lem}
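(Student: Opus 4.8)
The plan is to realise the sought radical of a group $G$ as the join $R$ of the family $\mathcal{N}$ of all normal $\mathbf{L}\mathcal{P}$-subgroups of $G$, and the entire difficulty is to see that this join again lies in $\mathbf{L}\mathcal{P}$. Everything reduces to a two-subgroup statement: if I can show that whenever $M,N\trianglelefteq G$ lie in $\mathbf{L}\mathcal{P}$ their product $MN$ does too, then $MN$ is again a member of $\mathcal{N}$, so $\mathcal{N}$ is directed under inclusion and $R=\bigcup\mathcal{N}$ is an honest subgroup. It is normal because each member is, and it is visibly the unique largest normal $\mathbf{L}\mathcal{P}$-subgroup. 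That $R\in\mathbf{L}\mathcal{P}$ is then automatic: any finitely generated $F\le R$ lies, by finite generation, inside a single member $N$ of the directed union, and $N\in\mathbf{L}\mathcal{P}$ forces $F\in\mathcal{P}$. Here I use that $\mathbf{L}\mathcal{P}$ is subgroup-closed, which is immediate from $\mathcal{P}$ being subgroup-closed.

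So I would concentrate on the two-subgroup lemma: $M,N\trianglelefteq G$ in $\mathbf{L}\mathcal{P}$ imply $MN\in\mathbf{L}\mathcal{P}$. First I reduce to finitely generated subgroups and to a convenient shape. Given a finitely generated $F\le MN$, writing each of its generators as a product of an element of $M$ and one of $N$ yields finitely many $x_1,\dots,x_r\in M$ and $y_1,\dots,y_s\in N$ with $F\le L:=\langle x_1,\dots,x_r,y_1,\dots,y_s\rangle$; since $\mathcal{P}$ is subgroup-closed it suffices to prove $L\in\mathcal{P}$. Put $X=\langle x_1,\dots,x_r\rangle\le M$ and $Y=\langle y_1,\dots,y_s\rangle\le N$, both finitely generated and so in $\mathcal{P}$. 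Passing to normal closures inside $L$ and using that $M,N$ are normal in $G$, the subgroups $U:=X^{L}\le M$ and $V:=Y^{L}\le N$ are normal in $L$, lie in $\mathbf{L}\mathcal{P}$, and satisfy $L=UV$.

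The hypothesis on $\mathcal{P}$ enters precisely at the decomposition $L=UV$ with $U,V\trianglelefteq L$, which I would like to feed into it to conclude $L\in\mathcal{P}$. The main obstacle is that the hypothesis speaks of normal $\mathcal{P}$-subgroups, hence of finitely generated ones, whereas $U$ and $V$ are a priori only $\mathbf{L}\mathcal{P}$-groups; promoting them to genuine $\mathcal{P}$-factors is the real content. To attack this I would exploit the fact that for normal subgroups one always has $[U,V]\le U\cap V=:C$, so that modulo $C$ the group $L$ splits as a direct product $\overline{U}\times\overline{V}$; as $L$ is finitely generated these images are finitely generated, and one checks they agree with the images of $X$ and $Y$. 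Recognising $L$ as a product of honestly finitely generated normal subgroups — the abstract counterpart of the step in the Hirsch--Plotkin theorem where local nilpotence of the normal closures is upgraded to nilpotence — is the step I expect to be the crux, and the stated product property for $\mathcal{P}$ is the exact input designed to carry it out.
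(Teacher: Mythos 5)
Your setup is the same as the paper's: reduce to showing that the product of two normal $\mathbf{L}\mathcal{P}$-subgroups $M,N$ is again $\mathbf{L}\mathcal{P}$, pass to a finitely generated $L=\langle X,Y\rangle$ with $X\subseteq M$, $Y\subseteq N$ finite, and decompose $L=X^L\,Y^L$ as a product of normal subgroups. But at the decisive moment --- promoting $U=X^L$ and $V=Y^L$ from $\mathbf{L}\mathcal{P}$-groups to genuine (finitely generated) $\mathcal{P}$-groups --- you explicitly stop and only name the difficulty. The tool you propose for it does not work: passing to $L/C$ with $C=U\cap V$ and observing $L/C\cong \overline{U}\times\overline{V}$ with $\overline{U}=\langle\overline{X}\rangle$, $\overline{V}=\langle\overline{Y}\rangle$ finitely generated tells you nothing about $U$ and $V$ themselves. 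Finite generation of $U/(U\cap V)$ does not imply finite generation of $U$ (the intersection $U\cap V$ is a priori not finitely generated), and membership of $\overline{U}$ in $\mathcal{P}$ cannot be inferred either, since $\mathcal{P}$ is not assumed closed under quotients, nor under extensions, so there is no way to climb back from $L/C$ to $L$. The hypothesis on $\mathcal{P}$ therefore never becomes applicable in your argument.

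The missing idea, which is the actual content of the paper's proof, is a bootstrapping argument using the finite commutator set $C_0=\{[x,y]:\ x\in X,\ y\in Y\}$. Normality of $M$ and $N$ in $G$ puts $C_0$ inside $M\cap N$. Hence $\langle C_0,X\rangle$ is a finitely generated subgroup of $M$, so lies in $\mathcal{P}$; then $C_0^{\langle X\rangle}\leqslant\langle C_0,X\rangle$ lies in $\mathcal{P}$ by subgroup-closure, and therefore is \emph{finitely generated} --- this is exactly where the standing assumption that $\mathcal{P}$ consists of finitely generated groups earns its keep. Consequently $\langle Y,C_0^{\langle X\rangle}\rangle$ is a finitely generated subgroup of $N$, hence in $\mathcal{P}$. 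Finally, the identity $C_0^{\langle X\rangle\langle Y\rangle}=[\langle X\rangle,\langle Y\rangle]$ (Robinson 5.1.7) yields
\[
\langle Y,C_0^{\langle X\rangle}\rangle=\langle Y,C_0^{\langle X\rangle\langle Y\rangle}\rangle=\bigl\langle Y,[\langle X\rangle,\langle Y\rangle]\bigr\rangle=Y^{\langle X\rangle},
\]
so the normal closure $Y^{\langle X\rangle}$ lies in $\mathcal{P}$, and symmetrically so does $X^{\langle Y\rangle}$; only now can the product hypothesis on $\mathcal{P}$ be applied to $L=X^{\langle Y\rangle}Y^{\langle X\rangle}$. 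Without some argument of this kind your proposal remains a correct reduction plus an unproven claim, so it has a genuine gap.
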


\begin{proof}It suffices to prove that the join of any two normal ${\small \mathbf L}\mathcal P$-subgroups again belongs to ${\small \mathbf L}\mathcal P$. Below we adopt the notation that for subsets $X$ and $Y$ of a group, 
$X^Y$ denotes the subgroup generated by the conjugates $y^{-1}xy$, $x\in X,y\in Y$.
Let $H$ and $K$ be normal ${\small \mathbf L}\mathcal P$-subgroups. To prove that $HK$ is an ${\small \mathbf L}\mathcal P$-group, we must show that for finite subsets $X\subseteq H$ and $Y\subseteq K$ the subgroup $\langle X, Y\rangle$ belong to $\mathcal P$. Fix such a choice of $X$ and $Y$, and let $C=\{[x,y]=x^{-1}y^{-1}xy;\ x\in X, y\in Y\}$. Note that $C$ is finite and is a subset of $H\cap K$ because $H$ and $K$ are normal. Therefore $\langle C,X\rangle$ is a finitely generated subgroup of $H$ and so belongs to $\mathcal P$. Since $\mathcal P$ is subgroup closed, we deduce that $C^{\langle X\rangle}$ belongs to $\mathcal P$ and so it is finitely generated. Moreover, $C^{\langle X\rangle}$ is contained in $H\cap K$. Therefore 
$\langle Y,C^{\langle X\rangle}\rangle$ is a finitely generated subgroup of $K$ and hence it belongs to $\mathcal P$. Therefore the subgroup $C^{\langle X\rangle\langle Y\rangle}$ also belongs to $\mathcal P$. Now \cite{RobinsonDJS1996}*{5.1.7} shows that 
$C^{\langle X\rangle\langle Y\rangle}=[\langle X\rangle,\langle Y\rangle]$, the subgroup generated by all the commutators $[z,w]$ with $z \in \langle X \rangle$ and $w \in \langle Y \rangle$, and then we have
$$\langle Y,C^{\langle X\rangle}\rangle=\langle Y,C^{\langle X\rangle\langle Y\rangle}\rangle
=\langle Y, [\langle X\rangle,\langle Y\rangle]\rangle=Y^{\langle X\rangle}.$$ This shows that 
$Y^{\langle X\rangle}$ belongs to $\mathcal P$. By symmetry $X^{\langle Y\rangle}$ belongs to $\mathcal P$. Therefore $\langle X,Y\rangle=Y^{\langle X\rangle}X^{\langle Y\rangle}$ is the join of two normal $\mathcal P$-subgroups of $\langle X,Y\rangle$, and so belongs to $\mathcal P$.
\end{proof}

\begin{cor}
In any group there is a unique largest normal locally polycyclic subgroup and there is a unique largest normal locally \{virtually polycyclic\} subgroup, and both these subgroups are characteristic.
\end{cor}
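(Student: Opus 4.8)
The plan is to apply \cref{General Hirsch Plotkin method} twice: once with $\mathcal P$ the class of polycyclic groups, and once with $\mathcal P$ the class of virtually polycyclic groups. In each case ${\small \mathbf L}\mathcal P$ is by definition exactly the class of locally polycyclic, respectively locally \{virtually polycyclic\}, groups, so the lemma immediately delivers a unique largest normal subgroup of the stated kind. It then remains only to verify the two hypotheses of the lemma for each of the two classes, and to upgrade ``unique largest normal'' to ``characteristic''.

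First I would check the hypotheses for the class $\mathcal P$ of polycyclic groups. That polycyclic groups are finitely generated, and that $\mathcal P$ is subgroup-closed, are standard facts (polycyclic groups satisfy the maximal condition, so every subgroup is again polycyclic). For the join condition, let $H$ and $K$ be normal polycyclic subgroups of some group. Since $H$ is normal, $HK$ is a subgroup, and the second isomorphism theorem gives $HK/K \cong H/(H\cap K)$, a quotient of the polycyclic group $H$ and hence polycyclic. Thus $HK$ is an extension of the polycyclic group $K$ by the polycyclic group $HK/K$, and as the class of polycyclic groups is closed under extensions, $HK$ is polycyclic. Both hypotheses hold, so \cref{General Hirsch Plotkin method} yields the unique largest normal locally polycyclic subgroup.

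The argument for virtually polycyclic groups is identical in shape: this class is again subgroup-closed (if $P \le V$ is polycyclic of finite index and $S \le V$, then $S \cap P$ is polycyclic and of finite index in $S$), is closed under extensions, and the same computation $HK/K \cong H/(H\cap K)$ exhibits $HK$ as an extension of one virtually polycyclic group by another, hence virtually polycyclic. Finally, to see that each of the resulting radicals $R$ is characteristic, I would note that being locally (virtually) polycyclic is an isomorphism-invariant property and that automorphisms send normal subgroups to normal subgroups; hence for any $\alpha \in \Aut(G)$ the image $\alpha(R)$ is again a normal ${\small \mathbf L}\mathcal P$-subgroup, so $\alpha(R) \subseteq R$ by maximality, and applying the same reasoning to $\alpha^{-1}$ forces $\alpha(R) = R$. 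I do not expect a genuine obstacle here: the entire content is the closure of the two classes under $(H,K) \mapsto HK$ for normal subgroups, which reduces via the isomorphism theorem to closure under extensions, a standard property of (virtually) polycyclic groups.
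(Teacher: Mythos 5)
Your proof is correct and follows essentially the same route as the paper: the paper's proof also just verifies the hypotheses of \cref{General Hirsch Plotkin method} by citing closure of the (virtually) polycyclic groups under subgroups, quotients, and extensions (which is exactly your second-isomorphism-theorem argument for the join condition, spelled out) together with finite generation. Your explicit verification of the characteristic property via automorphism-invariance and maximality is a detail the paper leaves implicit, but it is the standard and intended argument.
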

\begin{proof}
Both the class of polycyclic groups and the class of virtually polycyclic groups are closed under subgroups, quotients, and extensions.  Since virtually polycylic groups are finitely generated, both classes satisfy the hypotheses of \Cref{General Hirsch Plotkin method}.
\end{proof}

\begin{cor}\label{peters adjustment}
Suppose that the group $G$ has a subgroup $H$ of finite index which is an ascending HNN extension over a virtually polycyclic vertex group. Then $G$ itself is virtually polycyclic or is an ascending HNN extension over a virtually polycyclic vertex group.
\end{cor}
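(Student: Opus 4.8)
The plan is to reduce the whole statement to a single epimorphism $G\to\Z$, controlled by the largest normal locally-\{virtually polycyclic\} radical of $G$, and then to feed that epimorphism into the Bieri--Neumann--Strebel invariant. Write $\mathcal P$ for the class of virtually polycyclic groups, so that $\mathbf L\mathcal P$ is the class of locally virtually polycyclic groups. First note that $H$, being an ascending HNN extension over a finitely generated virtually polycyclic vertex group, is finitely generated, and hence so is $G$. If the HNN extension is not proper, then the vertex group is normal and $H$ is (virtually polycyclic)-by-$\Z$, hence virtually polycyclic, and therefore so is $G$; so from now on I would assume $H$ is properly ascending. Then the kernel $N$ of the defining epimorphism $\psi\colon H\to\Z$ is the ascending union of the conjugates of the vertex group, so $N$ is a normal $\mathbf L\mathcal P$-subgroup of $H$ that is not finitely generated, and $H/N\cong\Z$.

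By \Cref{General Hirsch Plotkin method} the group $G$ has a unique largest normal $\mathbf L\mathcal P$-subgroup $R$; being unique, it is characteristic. Let $H_0=\bigcap_{g\in G}gHg^{-1}$ be the normal core of $H$, a finite-index normal subgroup of $G$ contained in $H$, and set $N_0=N\cap H_0$. My first goal would be to prove
$$R\cap H_0=N_0,$$
which in particular shows that $N_0$ is normal in $G$ and that $H_0/N_0\cong\Z$. For the inclusion $R\cap H_0\subseteq N_0$ I would observe that $(R\cap H)N$ is a product of two normal $\mathbf L\mathcal P$-subgroups of $H$, and so lies in $\mathbf L\mathcal P$ by the join property established in the proof of \Cref{General Hirsch Plotkin method}; were $R\cap H$ not contained in $N$, this product would contain, for some power of the stable letter, a finitely generated properly ascending HNN extension over the virtually polycyclic vertex group, which violates the maximal condition and so is not virtually polycyclic, a contradiction. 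For the reverse inclusion I would use that the finitely many conjugates $g_iNg_i^{-1}$ are permuted by $G$ (as $N$ is characteristic in $H$), so that the join $M=\prod_i(g_iNg_i^{-1}\cap H_0)$ is again a normal $\mathbf L\mathcal P$-subgroup of $G$ by the same join property; hence $M\subseteq R$ and in particular $N_0\subseteq R$.

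The heart of the argument, and the step I expect to be the main obstacle, is to promote the epimorphism $\psi_0\colon H_0\to\Z$ with kernel $N_0$ (obtained by restricting $\psi$ and rescaling so that it is surjective) to an epimorphism of $G$ onto $\Z$. Since $N_0$ is normal in $G$, conjugation by any $g\in G$ is an automorphism of $H_0$ fixing $N_0=\ker\psi_0$, and therefore sends $\psi_0$ to $\pm\psi_0$. On the other hand $\psi_0\in\Sigma^1(H_0)$ while $-\psi_0\notin\Sigma^1(H_0)$: this one-sidedness of a properly ascending HNN extension follows from $\psi\in\Sigma^1(H)$ together with the standard behaviour of $\Sigma^1$ under passage to finite-index subgroups \cite{Bierietal1987} (the negative statement also follows directly from the criterion recalled in the introduction, since $N_0$ has finite index in the non-finitely-generated $N$ and so is not finitely generated). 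Because $\Sigma^1(H_0)$ is invariant under automorphisms of $H_0$, the only possibility is $\psi_0\circ c_g=\psi_0$ for all $g$; that is, $\psi_0$ is $G$-invariant, so $H_0/N_0$ is central of finite index in $G/N_0$. By Schur's theorem the commutator subgroup of the centre-by-finite group $G/N_0$ is finite, whence its abelianisation is infinite and $G/N_0$, and therefore $G$, surjects onto $\Z$.

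It remains to identify this structure as an ascending HNN extension. Let $\Phi\colon G\to\Z$ be the resulting epimorphism and $K=\ker\Phi$. Since $G/N_0$ is virtually $\Z$ and surjects onto $\Z$, its kernel is finite, so $K$ contains $N_0$ with finite index; as $N_0\in\mathbf L\mathcal P$ and finite-index subgroups of finitely generated groups are finitely generated, every finitely generated subgroup of $K$ is virtually polycyclic, so $K\in\mathbf L\mathcal P$. Finally $\Phi|_{H_0}$ is a nonzero multiple of $\psi_0$, which after replacing $\Phi$ by $-\Phi$ I may take to be positive, so that $[\Phi|_{H_0}]=[\psi_0]\in\Sigma^1(H_0)$; applying the finite-index compatibility of $\Sigma^1$ in the other direction gives $[\Phi]\in\Sigma^1(G)$. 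Hence $G$ is an ascending HNN extension with finitely generated vertex group $B\subseteq K$, and since $K\in\mathbf L\mathcal P$ the vertex group $B$ is virtually polycyclic, as required.
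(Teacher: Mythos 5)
Your proof is correct, and although it rests on the same two pillars as the paper's argument --- the radical supplied by \Cref{General Hirsch Plotkin method} and the ``one-sidedness plus automorphism-invariance'' trick for the BNS invariant --- it executes the endgame along a genuinely different route. The paper first replaces $H$ by a normal subgroup, puts $K=\bigcup_n B^{t^n}$ inside the radical $L$, and runs a trichotomy on $G/L$ (trivial, $\Z$, infinite dihedral): the dihedral case is excluded by exactly your $\Sigma^1$ argument, while the case $G/L\cong\Z$ is finished by finite presentability of $G$ together with \cite{BieriStrebel1978}*{Theorem~A}. You instead reduce to the properly ascending case, keep $H$ as it is, work with its normal core $H_0$, and prove the identification $R\cap H_0=N_0$ by hand; both of your join arguments are sound, and your compressed ``power of the stable letter'' step does work: $(R\cap H)N$ is a union of $N$-cosets, so it contains $t^m$ for some $m\geq 1$, and $\langle B,t^m\rangle$ is a finitely generated properly ascending HNN extension over $B$, hence non-Noetherian, contradicting local virtual polycyclicity. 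You then manufacture the epimorphism $G\to\Z$ via Schur's theorem (a step with no counterpart in the paper, where the epimorphism comes for free from $G/L\cong\Z$; your centrality claim and the passage to an infinite abelianisation are correct), and you split $G$ using Brown's characterisation of $\Sigma^1$ \cite{Brown1987} together with finite-index heredity of $\Sigma^1$. The trade-off is clear: the paper's route needs $G$ finitely presented and invokes \cite{BieriStrebel1978}*{Theorem~A}, whereas yours avoids both but imports the finite-index behaviour of $\Sigma^1$ in \emph{both} directions. That fact is true and standard, but it is a citation slip to attribute it to \cite{Bierietal1987}, where it is not proved; it appears in later accounts (e.g.\ Strebel's notes on the Sigma invariants), and your downward use of it could alternatively be replaced by the observation --- used without proof in the paper's own first sentence --- that finite-index subgroups of ascending HNN extensions over virtually polycyclic groups are again such, followed by Brown's characterisation.
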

\begin{proof}
Since finite-index subgroups of an ascending HNN extension over a virtually polycyclic vertex group are also ascending HNN extensions over virtually polycyclic vertex groups, we may assume that $H$ is normal.

We may suppose that $H=B*_{B,t}$ is an ascending HNN extension where $B$ is a virtually polycyclic subgroup of $H$, where $t$ is an element of $H$ such that $B\subseteq B^t$, and where $K=\bigcup_{n\geqslant 0}B^{t^n}$ is a normal subgroup of $H$ such that $H=K\langle t\rangle$. Being an ascending union of virtually polycyclic subgroups we know that $K$ is a locally virtually polycyclic subgroup and therefore contained in a characteristic such subgroup: namely the radical subgroup $L$ given by \Cref{General Hirsch Plotkin method} and its immediate corollary. Clearly $G/L$ has no non-trivial finite normal subgroups and is virtually cyclic, so $G/L$ is either trivial, infinite cyclic, or infinite dihedral.
When $G = L$, then $G$ is locally virtually polycyclic. Since $H$ is finitely generated, so is $G$, and thus $G$ is virtually polycyclic.

If $G/L$ is infinite dihedral, then $H/(H \cap L) \cong \mathbb Z$, and the two epimorphisms $H \to \mathbb Z$ with kernel $K$ thought of as elements of the first cohomology group of $H$ over $\mathbb Z$ lie in the same orbit under the action of $G$. Since $H$ is an ascending HNN extension with a finitely generated vertex group, one of these cohomology classes lies in the Bieri--Neumann--Strebel invariant $\Sigma^1(H)$; since the two classes are related via an automorphism of $H$, so does the other class. Therefore, the kernel of these two epimorphisms is  
finitely generated. This forces $K= B$, and so $H$ is virtually polycyclic. It follows that $G$ is virtually polycyclic as well.

Finally, if $G/L \cong \mathbb Z$ then using the fact that $G$ is finitely presented and \cite
{BieriStrebel1978}*{Theorem~A} we conclude that $G$ is an HNN extension with vertex group being a finitely generated subgroup of $L$. Since $G$ does not contain a non-abelian free group, the HNN extension is ascending. Also, finitely generated subgroups of $L$ are virtually polycyclic, and we are done.
\end{proof}

\section{Indicability, coherence, and cascading groups}

\begin{dfn}\label{defn indicable}
	A group is \emph{indicable} if it is trivial or maps onto $\Z$.
	
	A group $G$ is \emph{indicably coherent} if for every finitely generated subgroup $H \leqslant G$ and every epimorphism $\phi \colon H \to \Z$, the group $H$ splits as an HNN extension with finitely generated vertex group $A$ in such a way that $\phi$ coincides with the quotient map dividing $H$ by the normal closure of $A$; in such a situation, we will say that the HNN extension \emph{realises}  the epimorphism $\phi$.
\end{dfn}

Note that when $G$ is indicably coherent and does not contain non-abelian free groups, we automatically get that the HNN extension is ascending or descending. Also, homologically coherent groups are indicably coherent, by \cite{BieriStrebel1978}*{Theorem~A}.

Let us now state the crucial property of indicably coherent groups.

\begin{prop}
	If $G$ is indicably coherent and does not contain non-abelian free groups, then every quotient of $G$ is also indicably coherent.
\end{prop}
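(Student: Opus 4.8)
The plan is to verify the defining property of indicable coherence for an arbitrary quotient $\bar G = G/N$ directly: lift the relevant data to $G$, apply indicable coherence there, and then transport the resulting HNN structure back down along the quotient map $q \colon G \to \bar G$.

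So I fix a finitely generated subgroup $\bar H \leqslant \bar G$ and an epimorphism $\bar\phi \colon \bar H \to \Z$. Lifting a finite generating set of $\bar H$ to $G$ yields a finitely generated subgroup $H \leqslant G$ with $q(H) = \bar H$; I set $\phi = \bar\phi \circ (q|_H) \colon H \to \Z$, which is an epimorphism since $q|_H$ is onto. As $G$ is indicably coherent, $H$ splits as an HNN extension with finitely generated vertex group $B$ realising $\phi$, and since $H \leqslant G$ contains no non-abelian free subgroup, this extension is ascending or descending. I treat the ascending case, the descending case being identical. Thus $\phi(t) = 1$ for the stable letter $t$, while $B \subseteq B^t$, and the kernel $K = \ker\phi$ equals the normal closure $\langle\langle B\rangle\rangle = \bigcup_{n \geqslant 0} B^{t^n}$.

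Next I push the decomposition through $q$. Write $\bar B = q(B)$, which is finitely generated, and $\bar t = q(t)$. Applying $q$ to $B \subseteq B^t$ gives $\bar B \subseteq \bar B^{\bar t}$, and since a surjection carries normal closures to normal closures, $q(K) = \langle\langle\bar B\rangle\rangle = \bigcup_{n\geqslant0}\bar B^{\bar t^n}$. On the other hand $q(K) = \ker\bar\phi$, because $q|_H$ is surjective and $\phi = \bar\phi\circ(q|_H)$; hence $\bar\phi$ is precisely the map dividing $\bar H$ by the normal closure of $\bar B$. Finally $\bar\phi(\bar t) = \phi(t) = 1$, so $\bar t$ has infinite order and $\langle\bar t\rangle \cap \ker\bar\phi = 1$, whence $\bar H = \ker\bar\phi\cdot\langle\bar t\rangle = \bar K\langle\bar t\rangle$, where $\bar K := \ker\bar\phi$ is the ascending union $\bigcup_{n\geqslant0}\bar B^{\bar t^n}$ of conjugates of the finitely generated group $\bar B$. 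This is exactly the data realising $\bar\phi$ by an ascending HNN extension over $\bar B$, so $\bar G$ is indicably coherent.

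The step I expect to be the crux is the last one: ensuring that the descended data $(\bar B, \bar t)$ genuinely assembles into an HNN extension, rather than merely into a group generated by $\bar B$ and $\bar t$ with $\bar B \subseteq \bar B^{\bar t}$. This works cleanly precisely because the original extension is ascending (or descending): the kernel is then the ascending union $\bigcup_n B^{t^n}$, which survives application of $q$ as the ascending union $\bigcup_n \bar B^{\bar t^n}$, and which together with $\bar K\cap\langle\bar t\rangle = 1$ displays $\bar H$ as $\bar K\rtimes\langle\bar t\rangle$ in the required form. For a two-sided HNN extension the normal closure of the vertex group has a free-product-like structure whose image under $q$ need not retain the HNN form; this is exactly where the hypothesis that $G$, and hence $H$, contains no non-abelian free subgroup is essential.
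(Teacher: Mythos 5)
Your argument is sound and follows the same lift--split--push-down skeleton that the statement forces, but it handles the crux differently from the paper. The paper derives the proposition from a standalone quotient lemma (\cref{HNN lemma}): there, the monomorphism $\iota$ of the vertex group descends to a monomorphism $\iota'$ of the image vertex group, one builds the \emph{abstract} ascending HNN extension over that image, and one proves that the canonical surjection from this abstract extension onto the quotient group is injective via a normal-form argument (an element of the kernel has exponent sum zero in the stable letter, so conjugates into the image vertex group, which meets the kernel trivially). You instead argue internally: you identify $\ker\bar\phi = q(\ker\phi) = \bigcup_{n\geqslant 0}\bar B^{\bar t^{n}}$, observe $\bar H = \ker\bar\phi \rtimes \langle \bar t\rangle$, and then invoke the recognition principle that a group with such internal data \emph{is} an ascending HNN extension over $\bar B$. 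That principle is true, and it matches the internal description of ascending HNN extensions the paper itself uses elsewhere (e.g.\ in the proof of \cref{peters adjustment}); but it is exactly the non-trivial content of the proposition, and your write-up asserts it rather than proves it: a priori the canonical map $\bar B \ast_{\bar\iota} \to \bar H$ from the abstract HNN extension could have non-trivial kernel. Given what you have already established, the missing verification is only a few lines: every element of $\bar B\ast_{\bar\iota}$ (with stable letter $s$) can be written $s^{-n}bs^{k}$ with $b\in\bar B$, $n\geqslant 0$, $k\in\Z$; if its image $\bar t^{-n}b\,\bar t^{k}$ is trivial in $\bar H$, then the image of $b$ equals $\bar t^{\,n-k}$ and so lies in $\bar B\cap\langle\bar t\rangle \leqslant \ker\bar\phi\cap\langle\bar t\rangle=\{1\}$, forcing $b=1$ and $n=k$. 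Adding those lines makes your proof complete, and they are in effect a repackaging of the paper's injectivity argument. Your route gains directness; the paper's route gains a reusable lemma, which it invokes again in proving \cref{cascading lemma}.
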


This result follows immediately from the next lemma.

\begin{lem}
	\label{HNN lemma}
	Let $G$ be an ascending HNN extension with vertex group $A$, inducing an epimorphism $\phi \colon G \to \Z$. If $\rho \colon G \to Q$ is an epimorphism such that $\phi$ factors through $\rho$, then $Q$ is an ascending HNN extension with vertex group $\rho(A)$.
\end{lem}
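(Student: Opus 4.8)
The plan is to transport the ascending HNN structure of $G$ across $\rho$ by tracking the images of the vertex group, the stable letter, and the kernel of $\phi$. Write $\bar A = \rho(A)$ and $s = \rho(t)$, where $t$ is the stable letter of the given structure on $G$ (so $A \subseteq A^t$), and let $\bar\phi \colon Q \to \Z$ be the map through which $\phi$ factors, i.e. $\phi = \bar\phi \circ \rho$. First I would record the elementary translations: since $\rho$ is onto and $G = \langle A, t\rangle$, we have $Q = \langle \bar A, s\rangle$; applying $\rho$ to $A \subseteq A^t$ gives $\bar A \subseteq \bar A^s$, the ascending condition for the putative vertex group $\bar A$ and stable letter $s$; and from $\phi(A) = 0$, $\phi(t) = 1$ one reads off $\bar\phi(\bar A) = 0$ and $\bar\phi(s) = 1$, so that $\bar\phi$ is onto and $s$ has infinite order modulo $\ker\bar\phi$.

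The key step is to identify $\ker\bar\phi$. I would first show $\rho(\ker\phi) = \ker\bar\phi$: the inclusion $\subseteq$ is immediate from $\bar\phi\circ\rho = \phi$, and for $\supseteq$ one lifts $q \in \ker\bar\phi$ to some $g \in G$ with $\rho(g) = q$ and notes $\phi(g) = \bar\phi(q) = 0$, whence $g \in \ker\phi$. Since $G$ is an ascending HNN extension, $\ker\phi$ is the normal closure of $A$, namely the increasing union $\bigcup_{n \geqslant 0} A^{t^n}$. As $\rho$ carries a union to the union of the images, this yields $\ker\bar\phi = \bigcup_{n\geqslant 0}\bar A^{s^n}$, an increasing union of conjugates of $\bar A$.

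Finally I would assemble these facts into the HNN structure. Because $\bar A \subseteq \bar A^s$ is equivalent to $s\bar A s^{-1} \subseteq \bar A$, every conjugate $s^n \bar A s^{-n}$ with $n \geqslant 0$ already lies in $\bar A$, so the normal closure of $\bar A$ in $Q$ is exactly $\bigcup_{n\geqslant 0}\bar A^{s^n} = \ker\bar\phi$, and $Q/\ker\bar\phi \cong \Z$ via $\bar\phi$. Thus $Q$ is generated by $\bar A$ and $s$, satisfies $\bar A \subseteq \bar A^s$, and the map dividing $Q$ by the normal closure of $\bar A$ is precisely $\bar\phi$; moreover conjugation by $s$ restricts to an isomorphism of $s\bar A s^{-1}$ onto $\bar A$, so this is genuine ascending HNN data with vertex group $\bar A = \rho(A)$, as required.

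The main obstacle is the exhaustion statement $\ker\bar\phi = \bigcup_{n}\bar A^{s^n}$: this is where both the surjectivity of $\rho$ and the hypothesis that $\phi$ factors through $\rho$ are indispensable. Without it one would only know $\bigcup_{n}\bar A^{s^n} \subseteq \ker\bar\phi$, and $Q$ could a priori fail to be an HNN extension with vertex group exactly $\rho(A)$ — the union might be a proper subgroup and the quotient of $Q$ by the normal closure of $\bar A$ could be strictly larger than $\Z$. Care is also needed to keep the direction of the ascending inclusion consistent throughout, but this is bookkeeping rather than a genuine difficulty.
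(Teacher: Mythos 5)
Your preliminary steps are all correct, and your route is genuinely different from the paper's: you work internally in $Q$, verifying that $\bar A = \rho(A)$ and $s = \rho(t)$ satisfy generation, the ascending inclusion $s\bar A s^{-1}\leqslant \bar A$, and that the normal closure of $\bar A$ in $Q$ equals $\ker\bar\phi$ with quotient $\Z$. (One side benefit of this viewpoint: the paper has to check that $\iota$ descends to a well-defined injective endomorphism $\iota'$ of $\rho(A)$, whereas for you $\iota'$ is just conjugation by $s$ inside $Q$, so both points are automatic.) The gap is in your last sentence. What has to be proved is that $Q$ \emph{is} an ascending HNN extension with vertex group $\rho(A)$, i.e.\ that the canonical surjection $\pi\colon \rho(A)\ast_{\iota'}\to Q$ from the formal HNN extension --- the Bass--Serre object, whose only relations involving the stable letter are $\tilde s a \tilde s^{-1}=\iota'(a)$ --- is injective; this is exactly what the paper's proof establishes by constructing the abstract extension $H$ and showing $\sigma\colon H\to Q$ is an isomorphism. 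The list of properties you verify is not the definition of an ascending HNN extension, and declaring it to be ``genuine ascending HNN data'' renames the problem rather than solving it. Note that generation plus the ascending inclusion alone are certainly insufficient (take $Q=\Z=\langle s\rangle$ and $\bar A=2\Z$: then $s\bar A s^{-1}=\bar A$ and $Q=\langle \bar A,s\rangle$, but $Q\not\cong \bar A\ast_{\mathrm{id}}\cong\Z^2$), so \emph{some} argument converting your conditions into the splitting is unavoidable: a priori $Q$, being a quotient of $G$, could satisfy relations beyond the HNN ones.

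The missing argument is short, but it is the core of the lemma and must be said. If $x\in\ker\pi$, then since $\bar\phi\circ\pi$ agrees on generators with the canonical epimorphism $\rho(A)\ast_{\iota'}\to\Z$, the element $x$ lies in the kernel of the latter, which is the increasing union $\bigcup_{n\geqslant 0}\tilde s^{-n}\rho(A)\tilde s^{n}$; and $\pi$ is injective on each term $\tilde s^{-n}\rho(A)\tilde s^{n}$ (there it is a conjugate of the identity map of $\rho(A)$, since $\pi$ restricts to the identity on the vertex group), hence injective on the directed union, so $x=1$. This is precisely parallel to the paper's step in which $h\in\ker\sigma$ is written with exponent sum zero, conjugated by a large power of $t$ into $\rho(A)$, and killed by $\ker\sigma\cap\rho(A)=\{1\}$. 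Incidentally, your closing paragraph locates the difficulty in the wrong place: the exhaustion $\ker\bar\phi=\bigcup_{n}\bar A^{s^n}$ is automatic from your other facts (the normal closure $N=\bigcup_n\bar A^{s^n}$ satisfies $N\leqslant\ker\bar\phi$, the quotient $Q/N$ is cyclic generated by $sN$, and $\bar\phi$ descends to an isomorphism $Q/N\to\Z$, forcing $N=\ker\bar\phi$), whereas the step you treat as definitional is where all the content lies.
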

\begin{proof}
	Let $t \in G$ be the stable letter of the given ascending HNN extension, and let $\iota \colon A \to A$ be the associated monomorphism.
	Observe that $\iota(\ker \rho|_A) = t^{-1} \ker \rho|_A t \leqslant \ker \rho$; it is immediate that $\iota(\ker \rho|_A) \leqslant \iota(A) \leqslant A$, and so
	\[
	\iota(\ker \rho|_A) \leqslant \ker \rho|_A.
	\]
	Therefore, $\iota$ descends to a homomorphism $\iota' \colon \rho(A) \to \rho(A)$. Moreover, if $\iota'(a') = 1$ for some $a' \in \rho(A)$,  then for every $a \in \rho^{-1}(a')$  we have $\iota(a) \in \ker \rho$ and so $a = t \iota (a) t^{-1} \in \ker \rho$ as well, which in turn implies $a' = \rho(a) = 1$. Hence, $\iota'$ is injective.
	
	We may define an ascending HNN extension $H$ with vertex group $\rho(A)$, stable letter $t$, and associated monomorphism $\iota'$. It is clear that $\rho$ induces a quotient map $G \to H$ which can be followed with a quotient map $\sigma \colon H \to Q$ to become $\rho$.
	
	Take $h \in \ker \sigma$. Since $h \in H$, we may write
	\[
	h = t^{n_0} x_1 t^{n_1} \cdots x_m t^{n_m}
	\]
	with $x_i \in \rho(A)$, $n_i \in \Z$.
	Since $\phi$ factors through $\rho$, we must have $\sum n_i = 0$, and so we may rewrite $h = \prod_{i=1}^m t^{-n'_i}x_i t^{n'_i}$.
	 Let $N = \max_i |n'_i|$. We now have $t^{-N} h t^N \in \ker \sigma$ being a product of conjugates of elements of $\rho(A)$ by non-negative powers of $t$. But each such conjugate is obtained from an element of $\rho(A)$ by applying $\iota'$ a suitable number of times, and hence lies in $\rho(A)$ itself. Thus, $h \in \ker \sigma \cap \rho(A) = \{1\}$. We conclude that $\sigma$ is an isomorphism, as desired.
\end{proof}

\begin{dfn}[Elementary amenable hierarchy]
	Define $\mathsf{EA}_0$ to be the class of groups which are finite or abelian.   For each ordinal $\alpha>0$ define $\mathsf{EA}_\alpha$ to be the class of groups $G$ that are of one of the following forms:
	\begin{itemize}
	\item $G=\bigcup_i G_i$, where $G_i\in\mathsf{EA}_{\beta_i}$ with $\beta_i<\alpha$;
	\item and $G=N.Q$, where $.$ is any group extension and $N,Q\in\mathsf{EA}_{\beta}$ with $\beta<\alpha$.
	\end{itemize}
\end{dfn}

A very easy proof by transfinite induction shows that every class $\mathsf{EA}_\alpha$ is closed under taking subgroups. 

Before proceeding, we will need one technical definition.

\begin{dfn}[Cascading groups]
	For an ordinal $\alpha$, we define the class of $\alpha$-cascading groups inductively:
	\begin{itemize}
		\item The class of \emph{$0$-cascading} groups consists of finite groups;
		\item For $\alpha>0$, a group $G$ is \emph{$\alpha$-cascading} if it admits a finite index subgroup $H$ and an epimorphism $H \to \Z$ such that for every splitting of $H$ as an HNN extension realising the epimorphism, if the vertex group is finitely generated, then it is $\beta$-cascading for some $\beta<\alpha$.
	\end{itemize}
A group is \emph{cascading} if it is $\alpha$-cascading for some ordinal $\alpha$.
\end{dfn}

Observe that many groups will satisfy this definition vacuously, since they will have no finitely generated vertex groups at all. The definition really comes to life when combined with indicable coherence.

\begin{lem}
	Finitely generated free-abelian groups are cascading, and the property of being cascading passes to finite index overgroups.
\end{lem}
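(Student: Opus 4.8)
The plan is to treat the two assertions in turn. For the first, I would prove by induction on the rank $n$ that the free-abelian group $\Z^n$ is $n$-cascading. The base case $\Z^0 = \{1\}$ is finite and hence $0$-cascading. For the inductive step, with $n \geqslant 1$, I take the whole group $H = \Z^n$ (a finite-index subgroup of itself, index $1$ being permitted) together with any epimorphism $\phi \colon \Z^n \to \Z$, and analyse the splittings of $\Z^n$ realising $\phi$. The key observation is that since $\Z^n$ is abelian every subgroup is normal and therefore coincides with its own normal closure. In any HNN extension realising $\phi$, the vertex group $A$ embeds in $\Z^n$ as a subgroup whose normal closure is, by the definition of realising $\phi$, exactly $\ker \phi$. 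Hence the image of $A$ equals $\ker \phi \cong \Z^{n-1}$, so that $A \cong \Z^{n-1}$, which is $(n-1)$-cascading by the inductive hypothesis. As $n - 1 < n$, and as this holds for \emph{every} splitting realising $\phi$, this shows $\Z^n$ is $n$-cascading.

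For the second assertion, suppose $H$ is $\alpha$-cascading and $H \leqslant G$ with $[G : H] < \infty$. When $\alpha = 0$ the group $H$ is finite, hence $G$ is finite and thus $0$-cascading. When $\alpha > 0$, the cascading property of $H$ is witnessed by a finite-index subgroup $H_1 \leqslant H$ together with an epimorphism $\psi \colon H_1 \to \Z$ such that every splitting of $H_1$ realising $\psi$ with finitely generated vertex group has $\beta$-cascading vertex group for some $\beta < \alpha$. Since finite index is transitive, $H_1$ is also of finite index in $G$; moreover the condition on the splittings of $H_1$ is intrinsic to $H_1$ and makes no reference to the ambient group. The very same pair $(H_1, \psi)$ therefore witnesses that $G$ is $\alpha$-cascading.

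The only genuine content lies in the first part, in the identification of the vertex group of an arbitrary HNN splitting, and this is the step I would be most careful about: realising $\phi$ forces the vertex group to be precisely $\ker \phi$. This combines the elementary fact that a subgroup of an abelian group equals its own normal closure with the standard HNN-theoretic facts that the vertex group embeds in the total group and that its normal closure is the kernel of the natural retraction onto $\Z$ sending the stable letter to a generator. Once this identification is in place both claims follow, the second being essentially formal.
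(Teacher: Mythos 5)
Your proof is correct and follows exactly the route the paper intends: an induction on the rank for the first assertion (where the key point, which you identify precisely, is that in an abelian group the vertex group of any splitting realising $\phi$ equals its own normal closure and hence equals $\ker\phi \cong \Z^{n-1}$), and the formal observation that the witnessing pair $(H_1,\psi)$ for a finite-index subgroup also witnesses the cascading property of the overgroup. The paper leaves both parts as an ``obvious induction'' and an ``even easier'' remark, and your write-up supplies the intended details faithfully.
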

The proof of the first part is an obvious induction on the rank. The second part is even easier.

\begin{lem}
	\label{cascading lemma}
	Let $\alpha$ and $\beta$ be two given ordinals with $\beta < \alpha$.
	Let $G$ be a finitely generated 
	 group in $\mathsf{EA}_\alpha$, and suppose that $G$ fits into the group extension
	 \[
	 K \to G \xrightarrow{\rho}  Q
	 \]
	 where $K$ lies in $\mathsf{EA}_\beta$. If $Q$ and all finitely generated groups in $\mathsf{EA}_\beta$ are cascading, then so is $G$.
\end{lem}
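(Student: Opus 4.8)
The plan is to argue by transfinite induction on the least ordinal $\gamma$ for which $Q$ is $\gamma$-cascading, establishing the statement for all admissible $\alpha,\beta,G,K$ at once. For the base case $\gamma=0$ the group $Q$ is finite, so $K=\ker\rho$ has finite index in $G$; as $G$ is finitely generated, so is $K$. Being a finitely generated member of $\mathsf{EA}_\beta$, the group $K$ is cascading by hypothesis, and then $G$ is cascading because being cascading passes to finite index overgroups.

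For the inductive step, assuming $\gamma>0$, I would first fix a finite index subgroup $\bar H\leqslant Q$ and an epimorphism $\bar\psi\colon \bar H\to\Z$ witnessing that $Q$ is $\gamma$-cascading, and then pull these back by setting $H=\rho^{-1}(\bar H)$, a finite index subgroup of $G$, and $\psi=\bar\psi\circ\rho|_H\colon H\to\Z$, an epimorphism through which $\bar\psi$ factors. The claim is that $H$ and $\psi$ witness that $G$ is cascading. To verify this I would take an arbitrary HNN splitting of $H$ realising $\psi$ whose vertex group $A$ is finitely generated. Since $G$, and hence $H$, is elementary amenable and thus contains no non-abelian free subgroup, such a splitting must be ascending or descending. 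Applying \cref{HNN lemma} to $\rho|_H\colon H\to\bar H$ (in the descending case after replacing the stable letter by its inverse and $\psi$ by $-\psi$) then exhibits $\bar H$ as an HNN extension with vertex group $\rho(A)$ whose associated splitting realises $\bar\psi$; here one uses that a splitting realising $-\bar\psi$ has, after inverting its stable letter, the same vertex groups as one realising $\bar\psi$. As $\rho(A)$ is finitely generated, the $\gamma$-cascading property of $Q$ forces $\rho(A)$ to be $\delta$-cascading for some $\delta<\gamma$.

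Next I would bound the complexity of $A$ itself by intersecting the ambient extension with $A$ to obtain
\[
A\cap K \to A \xrightarrow{\rho|_A} \rho(A),
\]
where $A$ is finitely generated and lies in $\mathsf{EA}_\alpha$ as a subgroup of $G$, while $A\cap K$ lies in $\mathsf{EA}_\beta$ as a subgroup of $K$. Because $\rho(A)$ is $\delta$-cascading with $\delta<\gamma$ and finitely generated groups in $\mathsf{EA}_\beta$ are cascading by hypothesis, the inductive hypothesis applies to this extension and yields that $A$ is cascading. Finally, since every such vertex group $A$ is a subgroup of the fixed group $H$, these groups range over a set; letting $\eta$ be the supremum of their least cascading ordinals, each such $A$ is $\zeta$-cascading for some $\zeta<\eta+1$, so $H$ and $\psi$ witness that $G$ is $(\eta+1)$-cascading.

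The main obstacle will be the transfer of the HNN structure along $\rho$ in the inductive step: one must be sure that $\rho(A)$ genuinely occurs as a finitely generated vertex group of a splitting of $\bar H$ that realises $\bar\psi$, since only then can the cascading property of $Q$ be invoked. This is precisely where \cref{HNN lemma} and the amenability-driven reduction to one-sided HNN extensions do the real work, and the surrounding care amounts to keeping track of the sign ambiguity between $\bar\psi$ and $-\bar\psi$ and to checking the set-theoretic point that the vertex groups $A$ form a set, so that the supremum defining the cascading ordinal of $G$ makes sense. I expect no difficulty from $A\cap K$ or $K$ failing to be finitely generated, as the statement never requires the kernels to be finitely generated.
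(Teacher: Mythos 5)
Your proposal is correct and follows essentially the same route as the paper's proof: induction on the cascading ordinal $\gamma$ of $Q$, the same base case, the pullback $H=\rho^{-1}(\bar H)$ with $\psi=\bar\psi\circ\rho|_H$, transfer of any finitely generated vertex group $A$ to $\rho(A)$ via \cref{HNN lemma}, and the inner extension $A\cap K\to A\to\rho(A)$ handled by the inductive hypothesis, concluding with a supremum over the vertex groups. Your extra care about the descending-case sign ambiguity and the set-theoretic legitimacy of the supremum only makes explicit points the paper passes over silently.
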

\begin{proof}
	Suppose that $Q$ is $\gamma$-cascading.
The proof is an induction on $\gamma$. If $\gamma=0$, then $Q$ is finite, and hence $K$ is finitely generated. It lies in $\mathsf{EA}_\beta$, and hence it is cascading, forcing $G$ to be cascading as well.

Now, suppose that $\gamma>0$, and that the result holds for all ordinals strictly smaller than $\gamma$.
Let $R$ be a finite index subgroup of $Q$, and let $\phi \colon R \to \Z$ be an epimorphism such that for every HNN extension decomposition of $R$ realising $\phi$, if the vertex group is finitely generated, then it is $\delta$-cascading for some $\delta<\gamma$. Let $H = \rho^{-1}(R)$, and $\psi = \phi \circ \rho\vert_H \colon H \to \Z$. Observe that $\psi$ is an epimorphism.

Suppose that $H$ splits as an HNN extension realising $\psi$ with finitely generated vertex group $A$. Then, using the fact that this HNN extension must be ascending or descending, \cref{HNN lemma} implies that $R$ splits as an HNN extension realising $\phi$ with vertex group $\rho(A)$. Since $A$ is finitely generated, so is $\rho(A)$, and hence $\rho(A)$ is $\delta$-cascading for some $\delta < \gamma$. But now $A$ fits into the exact sequence
\[
K\cap A \to A \to \rho(A),
\]
and induction tells us that $A$ is $\epsilon_A$-cascading, for some ordinal $\epsilon_A$. It follows that $G$ is $(\bigcup_A \epsilon_A + 1)$-cascading, where the union runs over all finitely generated vertex groups $A$.
\end{proof}

\begin{cor}
	If $G$ is a finitely generated 
	elementary amenable group, then $G$ is cascading.
\end{cor}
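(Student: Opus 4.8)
The plan is to prove, by transfinite induction on the ordinal $\alpha$, the slightly stronger statement that \emph{every} finitely generated group in $\mathsf{EA}_\alpha$ is cascading; the corollary then follows at once, since every elementary amenable group lies in $\mathsf{EA}_\alpha$ for some $\alpha$. Setting up the induction around this uniform statement (rather than around a single fixed group) is the organisational point on which everything else hinges, because the extension case below will need to know that \emph{all} finitely generated groups at the lower level are cascading, not merely the particular quotient at hand.

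For the base case $\alpha = 0$, a finitely generated group in $\mathsf{EA}_0$ is either finite or finitely generated abelian. Finite groups are $0$-cascading by definition. A finitely generated abelian group contains a free-abelian subgroup of finite index; since finitely generated free-abelian groups are cascading and being cascading passes to finite index overgroups (by the lemma immediately preceding \cref{cascading lemma}), such a group is cascading.

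For the inductive step I would fix $\alpha > 0$, assume that every finitely generated group in $\mathsf{EA}_\beta$ is cascading for all $\beta < \alpha$, and take a finitely generated $G \in \mathsf{EA}_\alpha$. The definition of the hierarchy presents $G$ in one of two forms. If $G = \bigcup_i G_i$ is a directed union with each $G_i \in \mathsf{EA}_{\beta_i}$ and $\beta_i < \alpha$, then, since $G$ is finitely generated and the union is directed, a finite generating set is already contained in a single $G_i$, so that $G = G_i \in \mathsf{EA}_{\beta_i}$ and the induction hypothesis applies directly. If instead $G = N.Q$ is an extension with $N, Q \in \mathsf{EA}_\beta$ for some $\beta < \alpha$, then $Q$, being a quotient of the finitely generated group $G$, is itself finitely generated, and hence cascading by the induction hypothesis; moreover all finitely generated groups in $\mathsf{EA}_\beta$ are cascading, again by the induction hypothesis. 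Applying \cref{cascading lemma} to the extension $N \to G \to Q$ (with $K = N$) then yields that $G$ is cascading, completing the induction.

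I expect no serious obstacle here, since the genuine difficulty has already been absorbed into \cref{cascading lemma}, whose proof in turn relies on \cref{HNN lemma} and on indicable coherence. Within this corollary the only points needing care are the reduction in the directed-union case, where finite generation forces $G$ into a single term of the union, and the handling of finitely generated abelian groups in the base case via the finite-index-overgroup property. The hard work is elsewhere; what remains is to thread these ingredients through a clean transfinite induction.
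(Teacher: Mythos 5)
Your proposal is correct and follows essentially the same route as the paper: a transfinite induction on the level $\alpha$ of the hierarchy, with the directed-union case collapsing by finite generation into a lower level, and the extension case handled by \cref{cascading lemma} applied to the finitely generated quotient $Q$ (the paper's base case simply says ``virtually abelian, hence cascading,'' which is your argument compressed). The only cosmetic difference is that you make explicit the strengthened inductive statement and the use of the lemma on free-abelian groups and finite-index overgroups, both of which the paper leaves implicit.
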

\begin{proof}
		Let $G \in \mathsf{EA}_\alpha$ for some ordinal $\alpha$, and suppose that $G$ is finitely generated. 
	The proof is by transfinite induction on $\alpha$.
	
	If $\alpha = 0$, then $G$ is virtually abelian, and hence cascading.
	
	Now suppose that $\alpha>0$, and that the result holds for all ordinals strictly smaller than $\alpha$.
	As $G$ is finitely generated, if it is obtained from groups lower in the hierarchy as a directed union, then it itself lies lower in the hierarchy, and we are done by the inductive hypothesis. Hence we may assume the existence of an epimorphism $\rho \colon G \to Q$ with $Q$ and $K = \ker \rho$ lower in the hierarchy. Since $G$ is finitely generated, so is $Q$, and hence $Q$ is cascading. We may now use \cref{cascading lemma}.
\end{proof}

We will need the following result. 

\begin{lem}
	\label{union of sols}
	A non-empty directed union of soluble groups of derived length $c$ is soluble of derived length $c$.
\end{lem}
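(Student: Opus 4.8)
The plan is to show that the derived series commutes with directed unions, and then read off the conclusion at level $c$. Write $G = \bigcup_i G_i$ for the directed union, regarded as a union of subgroups $G_i \leqslant G$ indexed by a directed set, and let $G^{(k)}$ denote the $k$-th term of the derived series, defined by $G^{(0)} = G$ and $G^{(k+1)} = [G^{(k)}, G^{(k)}]$. The heart of the matter is the identity $G^{(k)} = \bigcup_i G_i^{(k)}$, valid for every $k \geqslant 0$. Granting this for $k = c$, we obtain $G^{(c)} = \bigcup_i G_i^{(c)} = \{1\}$, since each $G_i$ is soluble of derived length $c$ so that $G_i^{(c)} = 1$, and since the union is non-empty the result is genuinely the trivial subgroup; hence $G$ is soluble of derived length at most $c$. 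For the matching lower bound I would fix any index $i$, available precisely because the union is non-empty, and observe that $G^{(c-1)} \supseteq G_i^{(c-1)} \neq 1$, so the derived length of $G$ is exactly $c$.

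The identity $G^{(k)} = \bigcup_i G_i^{(k)}$ I would prove by induction on $k$. The inclusion $\bigcup_i G_i^{(k)} \subseteq G^{(k)}$ is immediate, since $G_i \leqslant G$ forces $G_i^{(k)} \leqslant G^{(k)}$. For the reverse inclusion, the case $k = 0$ is just the definition of the union. For the inductive step, recall that $G^{(k+1)} = [G^{(k)}, G^{(k)}]$ is exactly the set of finite products of commutators $[a,b]$ with $a, b \in G^{(k)}$. Given such a product, the inductive hypothesis places each of the finitely many elements $a, b$ occurring in it inside some $G_i^{(k)}$; by directedness there is a single index $\ell$ for which all of these elements lie in $G_\ell^{(k)}$. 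Each commutator in the product then lies in $G_\ell^{(k+1)}$, and hence so does the whole product, yielding $G^{(k+1)} \subseteq \bigcup_i G_i^{(k+1)}$ and completing the induction.

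The only step requiring the slightest care is the appeal to directedness: one must first collect the finitely many group elements appearing in a given product and absorb them into a common subgroup $G_\ell$, and only then form the commutators. Beyond this bookkeeping there is no substantive obstacle, the lemma being essentially the formal statement that the verbal operation of passing to the derived subgroup commutes with directed unions, together with the remark that non-emptiness prevents the derived length from collapsing below $c$.
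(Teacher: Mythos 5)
Your proof is correct. It differs from the paper's in presentation rather than in underlying mechanism: the paper disposes of the upper bound in one line by observing that ``soluble of derived length at most $c$'' is defined by a group law (the iterated commutator identity in $2^c$ variables), hence is a \emph{local} property, hence passes to directed unions; you instead prove the stronger structural fact that the derived series commutes with directed unions, $G^{(k)} = \bigcup_i G_i^{(k)}$, by induction on $k$. Both arguments ultimately rest on the same point -- any finite set of elements witnessing a commutator computation lies, by directedness, in a single member $G_\ell$ of the system -- so your induction is in effect a self-contained proof of the special case of the general principle the paper cites. What your route buys is explicitness and a slightly stronger conclusion (the level-by-level identity for the derived series); what the paper's buys is brevity and the reusable observation that \emph{any} property defined by a group law is inherited by directed unions. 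Your treatment of the lower bound (fix any $G_i$, so $G^{(c-1)} \supseteq G_i^{(c-1)} \neq 1$) matches the paper's remark that the union contains a subgroup of derived length exactly $c$; the only cosmetic caveat is the degenerate case $c = 0$, where $G^{(c-1)}$ is not defined but the statement is trivial.
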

\begin{proof}
	Since being soluble of class at most $c$ is defined by a group law, it is a local property, and hence is inherited by directed unions. Our union is  soluble of derived length \emph{equal to} $c$, since it contains a soluble subgroup of class $c$.
\end{proof}

\begin{thm}\label{EA indcoh vSol}
	Let $G$ be a finitely generated elementary amenable group.  If $G$ is indicably coherent, then $G$ is virtually soluble.
\end{thm}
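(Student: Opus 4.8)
The plan is to argue by transfinite induction on the cascading ordinal of $G$. Recall that $G$ is cascading (by the corollary following \cref{cascading lemma}), that indicable coherence is inherited by subgroups (directly from \cref{defn indicable}), and that, being elementary amenable, $G$ contains no non-abelian free subgroup; by the remark following \cref{defn indicable} this forces every HNN extension provided by indicable coherence to be ascending or descending. We shall prove the sharper statement that every finitely generated elementary amenable indicably coherent group which is $\alpha$-cascading is virtually soluble, by induction on $\alpha$. The base case $\alpha=0$ is immediate, since $0$-cascading groups are finite.

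For the inductive step, suppose $G$ is $\alpha$-cascading with $\alpha>0$. By definition there is a finite-index subgroup $H \leq G$ and an epimorphism $\phi \colon H \to \Z$ witnessing the cascading property. As $H$ is finitely generated and indicably coherent, applying \cref{defn indicable} to the pair $(H,\phi)$ yields a splitting of $H$ as an HNN extension realising $\phi$ with finitely generated vertex group $A$. By the witnessing property of $\phi$, the group $A$ is $\beta$-cascading for some $\beta<\alpha$. Since $G$, and hence $H$, contains no non-abelian free subgroup, this HNN extension is ascending or descending; choosing the stable letter $t$ suitably, we regard $H$ as an ascending HNN extension with vertex group $A$. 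As a finitely generated subgroup of $G$, the group $A$ is elementary amenable and indicably coherent, so the inductive hypothesis applies and $A$ is virtually soluble. It remains to deduce that $H$, and hence its finite-index overgroup $G$, is virtually soluble.

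The heart of the matter, and the main obstacle, is therefore to show that an ascending HNN extension $H$ with finitely generated virtually soluble vertex group $A$ is itself virtually soluble: note that \cref{union of sols} applies only to unions of soluble groups of a fixed derived length, whereas the ascending union $K=\bigcup_{n\geq 0} A^{t^n}$ computing $\ker\phi$ is a priori merely a union of virtually soluble groups. To circumvent this, let $S \leq A$ be a finite-index soluble subgroup, put $m=[A:S]$, and set $A_0 = \bigcap\{U \leq A : [A:U] \leq m\}$. Since $A$ is finitely generated this is a finite intersection, so $A_0$ has finite index, say $m'=[A:A_0]$; as $A_0 \leq S$ it is soluble, of derived length $c$ say. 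The key point is that $A_0$ is mapped into itself by the injective endomorphism $\iota \colon a \mapsto t a t^{-1}$ of $A$ coming from the ascending structure: for any $U \leq A$ of index at most $m$, injectivity of $\iota$ gives $[A:\iota^{-1}(U)] \leq [A:U] \leq m$, so $A_0 \leq \iota^{-1}(U)$, and intersecting over all such $U$ yields $\iota(A_0) \leq A_0$.

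Granting this, $K_0 = \bigcup_{n \geq 0} t^{-n} A_0 t^n$ is an ascending union of copies of $A_0$, each soluble of derived length $c$, hence soluble of derived length $c$ by \cref{union of sols}. Since $[A^{t^n} : t^{-n} A_0 t^n] = m'$ for every $n$ and the subgroups $A^{t^n}$ are nested, any finite family of representatives of distinct $K_0$-cosets lies in a single $A^{t^n}$, where it remains distinct modulo $K_0 \cap A^{t^n}$; hence $[K:K_0] \leq m'$. Finally $\iota(A_0) \leq A_0$ shows that $t$ normalises $K_0$, so $\langle K_0, t\rangle$ is an extension of the soluble group $K_0$ by $\Z$, and therefore soluble, and it has finite index in $H = K\langle t\rangle$. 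Thus $H$, and with it its finite-index overgroup $G$, is virtually soluble, completing the induction.
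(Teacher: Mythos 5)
Your proposal is correct and follows essentially the same route as the paper: induction on the cascading ordinal, indicable coherence producing an HNN splitting that is ascending without loss of generality, the inductive hypothesis applied to the finitely generated vertex group $A$, and then promotion of ``virtually soluble vertex group'' to ``virtually soluble HNN extension'' by finding a finite-index soluble subgroup of $A$ invariant under the associated monomorphism $\iota$. The only (cosmetic) difference is in that last step: you take the intersection of all subgroups of index at most $[A:S]$, whereas the paper intersects the kernels of all homomorphisms to a fixed finite group $Q$; both constructions use finite generation of $A$ in the same way and yield the same conclusion via \cref{union of sols}.
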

\begin{proof}
	We have already shown that $G$ is $\alpha$-cascading. We now proceed by induction on $\alpha$.
	
	If $\alpha = 0$, then $G$ is finite and we are done. Otherwise, we have a finite-index subgroup $H \leqslant G$ and an epimorphism $  \phi \colon H \to \Z$ from the definition of cascading. Since $G$ is indicably coherent, we find an HNN extension of $H$ realising $\phi$ with finitely generated vertex group $A$. Now, $A$ is $\beta$-cascading for $\beta<\alpha$, and hence, by the inductive hypothesis, it is virtually soluble. Also,  the HNN extension is ascending (without loss of generality). Let $t$ denote its stable letter, and $\iota \colon A \to A$ the associated monomorphism.
	
	Since $A$ is virtually soluble, there exists a finite group $Q$ such that the intersection $B$ of the kernels of all homomorphisms $A \to Q$ is soluble. Moreover, since $A$ is finitely generated, the subgroup $B$ has finite index in $A$. Now, crucially, $\iota$ composed with any homomorphism $A \to Q$ is also such a homomorphism, and so $\iota(B) \leqslant B$. This allows us to construct a new ascending HNN extension, say $K$, with vertex group $B$ and stable letter $t$. It is immediate that $\bigcup_{i \in \N} t^iBt^{-i}$ is a normal subgroup of $\bigcup_{i \in \N} t^iAt^{-i}$, since $B$ is normal in $A$. Moreover, the image of every $t^iAt^{-i}$ in the quotient $\bigcup_{i \in \N} t^iAt^{-i}/\bigcup_{i \in \N} t^iBt^{-i}$ is finite and has cardinality bounded  above by $|A/B|$. Since the quotient is a union of such images, it is finite. Hence $K$ is a finite-index subgroup of $H$. It is also  soluble by \cref{union of sols}.
\end{proof}

\begin{cor}\label{EA coherent then vSol}
	Let $G$ be a finitely generated elementary amenable group. If $G$ is homologically coherent, then $G$ is virtually soluble.
\end{cor}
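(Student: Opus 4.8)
The plan is to reduce the statement immediately to \cref{EA indcoh vSol} by establishing that homological coherence implies indicable coherence. Once that implication is in hand the corollary is instant: a finitely generated elementary amenable group that is homologically coherent is indicably coherent, and \cref{EA indcoh vSol} then yields virtual solubility directly.

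The one genuine input is the passage from type $\typeFP{2}$ to the HNN splitting required by \cref{defn indicable}. Suppose that $G$ is homologically coherent, let $H \leqslant G$ be finitely generated, and let $\phi \colon H \to \Z$ be an epimorphism. By hypothesis $H$ is of type $\typeFP{2}$. The key tool is \cite{BieriStrebel1978}*{Theorem~A}, which guarantees that a group of type $\typeFP{2}$ equipped with an epimorphism onto $\Z$ splits as an HNN extension with finitely generated vertex group in such a way that the splitting realises the given map (that is, $\phi$ coincides with the quotient by the normal closure of the vertex group). This is exactly the defining condition witnessing indicable coherence; since $H$ and $\phi$ were arbitrary, $G$ is indicably coherent. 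This is precisely the observation recorded after \cref{defn indicable}.

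With $G$ shown to be indicably coherent, I would then simply invoke \cref{EA indcoh vSol}: its hypotheses---finite generation, membership in the class of elementary amenable groups, and indicable coherence---are now all satisfied, and the conclusion is that $G$ is virtually soluble.

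The main obstacle, such as it is, lies entirely in the correct application of the Bieri--Strebel splitting theorem and in checking that its conclusion matches the bookkeeping of \cref{defn indicable}; no further structural analysis is needed, because all the substantive work---the cascading machinery and the inductive solubility argument---has already been carried out in the proof of \cref{EA indcoh vSol}.
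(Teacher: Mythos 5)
Your proof is correct and follows exactly the same route as the paper's: invoke \cite{BieriStrebel1978}*{Theorem~A} to convert homological coherence (type $\typeFP{2}$ of finitely generated subgroups) into indicable coherence, then apply \cref{EA indcoh vSol}. No differences worth noting.
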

\begin{proof}
Since $G$ is homologically coherent, for every finitely generated subgroup $H$ of $G$ and every epimorphism $H\to\Z$ we may write $H$ as an  HNN extension with finitely generated (in fact $\typeFP{2}$) vertex group by \cite{BieriStrebel1978}*{Theorem~A}.   Thus, $G$ is indicably coherent.  The result follows from \Cref{EA indcoh vSol}.
\end{proof}

We are now ready to prove the main result.

\medskip

\begin{duplicate}[\Cref{thm main}]
For a finitely generated elementary amenable group $G$ the following are equivalent:
\begin{enumerate}
	\item $G$ is coherent;
		\label{item 1}
	\item $G$ is homologically coherent;
		\label{item 2}
	\item $\Z G$ is coherent;
		\label{item 3}
	\item either $G$ is virtually polycyclic or $G$ is a properly ascending HNN extension with virtually polycyclic vertex group.
	\label{item 4}
\end{enumerate}
\end{duplicate}

\begin{proof}
Clearly, each of \eqref{item 1} and \eqref{item 3} implies \eqref{item 2}. If we assume \eqref{item 4}, then the group $G$ is virtually soluble by \cref{union of sols}, and hence \Cref{Bieri Strebel soluble coherence} shows that $G$ admits a finite index subgroup $H$ with $\Z H$ being coherent.
Coherence of rings can be reformulated as follows: a ring $R$ is coherent if and only if every $R$-homomorphism between finitely generated free $R$-modules has finitely generated kernel. From this point of view it is immediate that $\Z G$ is also coherent, and \eqref{item 2} follows.

 Finally, assume \eqref{item 2}. \Cref{EA coherent then vSol}  implies that $G$ has a  finite-index soluble subgroup which is also homologically coherent.  Items \eqref{item 1} and \eqref{item 3} follow from \Cref{Bieri Strebel soluble coherence}, and the observation that coherence of groups passes to finite-index overgroups, and similarly for group rings. For \eqref{item 4} we additionally need to use \cref{peters adjustment}.
\end{proof}

\section{Noetherian groups}

For context, let us also mention the following result, which is well known but seems to be hard-to-locate in the literature. Note that a group is called \emph{Noetherian} or \emph{slender} when all its subgroups are finitely generated, or equivalently when it satisfies the ascending chain condition $\max$-$s$ on subgroups.

\begin{thm}
	\label{Baer for EA}
	Every Noetherian elementary amenable group $G$ is virtually polycyclic. 
\end{thm}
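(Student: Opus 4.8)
The plan is to run a transfinite induction over the elementary amenable hierarchy $\mathsf{EA}_\alpha$. First I would record the stability properties of the Noetherian condition on which the induction rests: the class of Noetherian groups is closed under subgroups and quotients (immediate from the definition), and also under extensions, since if $N \trianglelefteq G$ with $N$ and $G/N$ Noetherian and $H \leqslant G$, then both $H \cap N$ and $H/(H\cap N) \cong HN/N$ are finitely generated, hence so is $H$. In particular every Noetherian group is finitely generated. I would also invoke the classical theorem of Hirsch that a soluble group satisfying the maximal condition on subgroups is polycyclic; combined with closure of the Noetherian property under finite extensions, this yields the convenient reformulation that a group is virtually polycyclic if and only if it is simultaneously Noetherian and virtually soluble.

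The induction on $\alpha$, with $G \in \mathsf{EA}_\alpha$ assumed Noetherian, then runs as follows. For $\alpha = 0$ a finite group is trivially virtually polycyclic, while a Noetherian abelian group is finitely generated abelian and hence polycyclic. For $\alpha > 0$, suppose the result holds for all smaller ordinals. If $G = \bigcup_i G_i$ is a directed union with $G_i \in \mathsf{EA}_{\beta_i}$ and $\beta_i < \alpha$, then since $G$ is finitely generated its generating set lies in a single $G_i$ by directedness, so $G = G_i$ sits lower in the hierarchy and the inductive hypothesis applies. Otherwise $G$ fits into an extension $N \to G \to Q$ with $N, Q \in \mathsf{EA}_\beta$ and $\beta < \alpha$; here $N$ is a subgroup and $Q$ a quotient of $G$, so both are Noetherian and therefore virtually polycyclic by induction.

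It remains to deduce that the extension $G$ is itself virtually polycyclic, and this is the heart of the matter: I would appeal to the classical fact that the class of virtually polycyclic (equivalently, polycyclic-by-finite) groups is closed under extensions. The main obstacle is precisely this closure property, whose proof needs care because $N$ is only virtually soluble rather than soluble. The clean route exploits that $N$, being Noetherian, is finitely generated, and so possesses only finitely many subgroups of any given finite index: intersecting all subgroups of $N$ of index at most $[N:S]$, where $S \leqslant N$ is a finite-index soluble subgroup, produces a characteristic finite-index soluble subgroup $P \trianglelefteq N$, which is therefore normal in $G$. Passing to $G/P$ reduces matters to a finite-by-(virtually soluble) extension, which one handles by observing that the centraliser of the finite normal subgroup has finite index and is abelian-by-soluble, hence soluble; lifting the resulting finite-index soluble subgroup back through $P$, using that soluble-by-soluble is soluble, shows that $G$ is virtually soluble. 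Combining this with the Noetherian property of $G$ and the Hirsch reformulation from the first step gives that $G$ is virtually polycyclic, closing the induction.
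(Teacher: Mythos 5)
Your proof is correct and follows essentially the same route as the paper: a transfinite induction on the elementary amenable hierarchy, using that the Noetherian property passes to subgroups and quotients, with the directed-union case collapsing because a Noetherian group is finitely generated. The only divergence is in how the extension step is closed---the paper passes to a finite-index polycyclic normal subgroup of the kernel and uses that finite-by-polycyclic groups are polycyclic-by-finite, whereas you reduce to virtual solubility and invoke Hirsch's theorem (soluble $+$ max $=$ polycyclic); incidentally, your explicit replacement of the finite-index polycyclic subgroup of the kernel by a characteristic one (so that it is normal in $G$) supplies a detail that the paper's proof leaves implicit.
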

\begin{proof}
The proof is an induction on the elementary amenable hierarchy. Every finite group is virtually polycyclic, and so is every finitely generated (and hence every Noetherian) abelian group. This shows the result for the class $\mathsf{EA}_0$.

Now suppose that $G$ lies in $\mathsf{EA}_\alpha$, and the result holds for all groups in $\mathsf{EA}_\beta$ for all $\beta<\alpha$. Since $G$ is Noetherian, it is finitely generated. As shown above, this implies that $G$ lies in $\mathsf{EA}_\beta$ for some $\beta<\alpha$, or that we have an epimorphism $\rho \colon G  \to Q$ with $\ker \rho = K$, and both $K$ and $Q$ lie lower in the hierarchy. Since being Noetherian passes to subgroups and quotients, both $K$ and $Q$ are polycyclic-by-finite. By passing to a finite-index subgroup of $G$, we may assume that $Q$ is polycyclic.

Let $N$ be a finite-index normal subgroup of $K$ that is polycyclic. The group $G/N$ is then finite-by-polycyclic, and hence polycyclic-by-finite. We conclude that $G$ is virtually polycyclic.
\end{proof}

\bibliography{ElementaryAmenable}

\end{document}